\newtheorem{theorem}{Theorem}
\theoremstyle{plain}
\newtheorem{lemma}{Lemma}
\newtheorem{remark}{Remark}
\numberwithin{equation}{section}
\begin{document}
\title[Two asymptotic expansions for gamma function]{Two asymptotic
expansions for gamma function developed by Windschitl's formula}
\author{Zhen-Hang Yang and Jing-Feng Tian*}
\address{Zhen-Hang Yang, College of Science and Technology\\
North China Electric Power University, Baoding, Hebei Province, 071051, P.
R. China and Department of Science and Technology, State Grid Zhejiang
Electric Power Company Research Institute, Hangzhou, Zhejiang, 310014, China}
\email{yzhkm\symbol{64}163.com}
\address{Jing-Feng Tian\\
College of Science and Technology\\
North China Electric Power University\\
Baoding, Hebei Province, 071051, P. R. China}
\email{tianjf\symbol{64}ncepu.edu.cn}
\thanks{*Corresponding author: Jing-Feng Tian, e-mail: tianjf\symbol{64}%
ncepu.edu.cn}
\subjclass[2010]{Primary 33B15, 41A60; Secondary 41A10, 41A20}
\keywords{Gamma function, Windschitl's formula, asymptotic expansion}

\begin{abstract}
In this paper, we develop Windschitl's approximation formula for the gamma
function to two asymptotic expansions by using a little known power series.
In particular, for $n\in \mathbb{N}$ with $n\geq 4$, we have%
\begin{equation*}
\Gamma \left( x+1\right) =\sqrt{2\pi x}\left( \tfrac{x}{e}\right) ^{x}\left(
x\sinh \tfrac{1}{x}\right) ^{x/2}\exp \left( \sum_{k=3}^{n-1}\tfrac{\left(
2k\left( 2k-2\right) !-2^{2k-1}\right) B_{2k}}{2k\left( 2k\right) !x^{2k-1}}%
+R_{n}\left( x\right) \right)
\end{equation*}%
with%
\begin{equation*}
\left\vert R_{n}\left( x\right) \right\vert \leq \frac{\left\vert
B_{2n}\right\vert }{2n\left( 2n-1\right) }\frac{1}{x^{2n-1}}
\end{equation*}%
for all $x>0$, where $B_{2n}$ is the Bernoulli number. Moreover, we present
some approximation formulas for gamma function related to Windschitl's
approximation one, which have higher accuracy.
\end{abstract}

\maketitle

\section{Introduction}

It is known that the Stirling's formula%
\begin{equation}
n!\thicksim \sqrt{2\pi n}\left( \frac{n}{e}\right) ^{n}  \label{S}
\end{equation}%
for $n\in \mathbb{N}$ has various applications in probability theory,
statistical physics, number theory, combinatorics and other branches of
science. As a generalization of factorial function, the gamma function $%
\Gamma \left( x\right) =\int_{0}^{\infty }t^{x-1}e^{-t}dt$ for $x>0$ is no
exception. So many scholars pay attention to find various better
approximations for the factorial or gamma function, for example, Ramanujan 
\cite[P. 339]{Ramanujan-SB-1988}, Burnside \cite{Burnside-MM-46-1917},
Gosper \cite{Gosper-PNAS-75-1978}, Alzer \cite{Alzer-MC-66-1997}, \cite%
{Alzer-PRSE-139A-2009}, Windschitl (see \cite{Smith-2006} \cite{http/gamma}%
), Smith \cite{Smith-2006}, Batir \cite{Batir-P-27(1)-2008}, \cite%
{Batir-AM-91-2008}, Mortici \cite{Mortici-AM-93-2009-1}, \cite%
{Mortici-MMN-11(1)-2010}, \cite{Mortici-NA-56-2011}, \cite%
{Mortici-CMA-61-2011}, \cite{Mortici-MCM-57-2013}, \cite%
{Mortici0JMAA-402-2013} Nemes \cite{Nemes-AM-95-2010}, \cite%
{Nemes-TJM-9-2011}, Qi et al. \cite{Guo-JIPAM-9(1)-2008}, \cite%
{Qi-JCAM-268-2014}, Chen \cite{Chen-JNT-164-2016}, Yang et al. \cite%
{Yang-AMC-270-2015}, \cite{Yang-tian-jia}, \cite{Yang-JMAA-441-2016}, Lu et
al. \cite{Lu-RJ-35(1)-2014}, \cite{Lu-AMC-253-2015}.

As an asymptotic expansion of Stirling's formula (\ref{S}), one has the
Stirling's series for gamma function \cite[p. 257, Eq. (6.1.40)]%
{Abramowttz-1972} 
\begin{equation}
\Gamma \left( x+1\right) \thicksim \sqrt{2\pi x}\left( \frac{x}{e}\right)
^{x}\exp \left( \sum_{n=1}^{\infty }\frac{B_{2n}}{2n\left( 2n-1\right)
x^{2n-1}}\right)  \label{Stirling}
\end{equation}%
as $x\rightarrow \infty $, where $B_{2n}$ for $n\in \mathbb{N\cup }\mathbb{\{%
}0\mathbb{\}}$ is the Bernoulli number. It was proved in \cite[Theorem 8]%
{Alzer-MC-66-1997} by Alzer (see also \cite[Theorem 2]%
{Koumandos-JMAA-324-2006}) that for given integer $n\in \mathbb{N}$, the
function%
\begin{equation*}
F_{n}\left( x\right) =\ln \Gamma \left( x+1\right) -\left( x+\frac{1}{2}%
\right) \ln x+x-\frac{1}{2}\ln \left( 2\pi \right) -\sum_{k=1}^{n}\frac{%
B_{2k}}{2k\left( 2k-1\right) x^{2k-1}}
\end{equation*}%
is strictly completely monotonic on $\left( 0,\infty \right) $ if $n$ is
even, and so is $-F_{n}\left( x\right) $ if $n$ is odd. It thus follows that
the double inequality%
\begin{equation}
\exp \left( \sum_{k=1}^{2n}\frac{B_{2k}}{2k\left( 2k-1\right) x^{2k-1}}%
\right) <\frac{\Gamma \left( x+1\right) }{\sqrt{2\pi x}\left( x/e\right) ^{x}%
}<\exp \left( \sum_{k=1}^{2n-1}\frac{B_{2k}}{2k\left( 2k-1\right) x^{2k-1}}%
\right)  \label{g><}
\end{equation}%
holds for all $x>0$.

Another asymptotic expansion is the Laplace series (see \cite[p. 257, Eq.
(6.1.37)]{Abramowttz-1972}) 
\begin{equation}
\Gamma \left( x+1\right) \thicksim \sqrt{2\pi x}\left( \frac{x}{e}\right)
^{x}\left( 1+\frac{1}{12x}+\frac{1}{288x^{2}}-\frac{139}{51840x^{3}}-\frac{%
571}{2488320x^{4}}+\cdot \cdot \cdot \right)  \label{Laplace}
\end{equation}%
as $x\rightarrow \infty $. More asymptotic expansion developed by some
closed approximation formulas for gamma function can be found in \cite%
{Shi-JCAM-195-2006}, \cite{Feng-NA-64-2013}, \cite{Chen-AML-25-2012}, \cite%
{Chen-JCA-2-2013}, \cite{Chen-NA-64-2013}, \cite{Lu-JNT-136-2014}, \cite%
{Lu-JNT-140-2014}, \cite{Hirschhorn-RJ-34-2014}, \cite{Chen-AMC-245-2014}, 
\cite{Chen-AMC-261-2015}, \cite{Chen-JNT-149-2015}, \cite{Chen-AMC-250-2015}%
, \cite{Lin-JMI-9-2015}, \cite{Mortici-RJ-38-2015}, \cite{Xu-JNT-169-2016}
and the references cited therein.

Now let us focus on the Windschitl's approximation formula given by%
\begin{equation}
\Gamma \left( x+1\right) \thicksim W_{0}\left( x\right) =\sqrt{2\pi x}\left( 
\dfrac{x}{e}\right) ^{x}\left( x\sinh \frac{1}{x}\right) ^{x/2}\text{, as }%
x\rightarrow \infty .  \label{W0}
\end{equation}%
As shown in \cite[Eq. (3.8)]{Chen-JNT-164-2016}, the rate of Windschitl's
approximation $W_{0}\left( x\right) $ converging to $\Gamma \left(
x+1\right) $ is like $x^{-5}$ as $x\rightarrow \infty $, and is like $x^{-7%
\text{ }}$if replacing $W_{0}\left( x\right) $ with%
\begin{equation}
W_{1}\left( x\right) =\sqrt{2\pi x}\left( \dfrac{x}{e}\right) ^{x}\left(
x\sinh \frac{1}{x}\left[ +\frac{1}{810x^{6}}\right] \right) ^{x/2},
\label{W1}
\end{equation}%
by an easy check. These show that $W_{0}\left( x\right) $ and $W_{1}\left(
x\right) $ are excellent approximations for gamma function. Recently, Lu,
Song and Ma \cite{Lu-JNT-140-2014} extended Windschitl's formula to an
asymptotic expansion that%
\begin{equation}
\Gamma \left( n+1\right) \thicksim \sqrt{2\pi n}\left( \dfrac{n}{e}\right)
^{n}\left[ n\sinh \left( \frac{1}{n}+\frac{a_{7}}{n^{7}}+\frac{a_{9}}{n^{9}}+%
\frac{a_{11}}{n^{11}}+\cdot \cdot \cdot \right) \right] ^{n/2}
\label{g-ae-Lu}
\end{equation}%
as $n\rightarrow \infty $ with $%
a_{7}=1/810,a_{9}=-67/42525,a_{11}=19/8505,...$. An explicit formula for
determining the coefficients of $n^{-k}$ ($n\in \mathbb{N}$) was given in 
\cite[Theorem 1]{Chen-AMC-245-2014} by Chen. Other two asymptotic expansions%
\begin{eqnarray}
\Gamma \left( x+1\right) &\thicksim &\sqrt{2\pi x}\left( \dfrac{x}{e}\right)
^{x}\left( x\sinh \frac{1}{x}\right) ^{x/2+\sum_{j=0}^{\infty }r_{j}x^{-j}}%
\text{, }  \label{g-ae-Ch} \\
\Gamma \left( x+1\right) &\thicksim &\sqrt{2\pi x}\left( \dfrac{x}{e}\right)
^{x}\left( x\sinh \frac{1}{x}+\sum_{n=3}^{\infty }\frac{d_{n}}{x^{2n}}%
\right) ^{x/2},  \label{g-ae-W1}
\end{eqnarray}%
\ as $x\rightarrow \infty $ were presented in the papers \cite[Theorem 2]%
{Chen-AMC-245-2014}, \cite{Chen-AMC-250-2015}, respectively.

Inspired by the asymptotic expansions (\ref{g-ae-Lu}), (\ref{g-ae-Ch}), (\ref%
{g-ae-W1}) and Windschitl's approximation formula (\ref{W1}), the first aim
of this paper is to further present the following two asymptotic expansions
related to Windschitl's one (\ref{W0}): as $x\rightarrow \infty $,%
\begin{eqnarray}
\Gamma \left( x+1\right) &\thicksim &\sqrt{2\pi x}\left( \dfrac{x}{e}\right)
^{x}\left( x\sinh \frac{1}{x}\right) ^{x/2}\exp \left( \sum_{n=3}^{\infty }%
\frac{a_{n}}{x^{2n-1}}\right) ,  \label{g-ae-W0} \\
\Gamma \left( x+1\right) &\thicksim &\sqrt{2\pi x}\left( \dfrac{x}{e}\right)
^{x}\left( x\sinh \frac{1}{x}\right) ^{x/2}\left( 1+\sum_{n=1}^{\infty }%
\frac{b_{n}}{x^{n}}\right) ,  \label{g-ae-W0*}
\end{eqnarray}%
and give a more explicit coefficients formula in Chen's asymptotic expansion
(\ref{g-ae-Ch}). It is worth pointing out that those coefficients in (\ref%
{g-ae-W0}) have a closed-form expression, which is due to a little known
power series expansion of $\ln \left( t^{-1}\sinh t\right) $ (Lemma \ref%
{L-lnsh/t-ae}). We also give an estimate of remainder in asymptotic
expansion (\ref{g-ae-W0}). These results are presented in Section 2.

The second aim is to give some closed approximation formulas for gamma
function generated by truncating five asymptotic series just mentioned, and
compare accuracy of them by numeric computations and some inequalities.
These results are listed in Section 3.

\section{Asymptotic expansions and an estimate of remainder}

To obtain the explicit coefficients formulas in asymptotic expansions (\ref%
{g-ae-W0}), (\ref{g-ae-W0*}) and (\ref{g-ae-W1}), and to estimate the
remainder in asymptotic expansions (\ref{g-ae-W0}), we first give a lemma.

\begin{lemma}
\label{L-lnsh/t-ae}For $\left\vert t\right\vert <\pi $, we have%
\begin{equation}
\ln \frac{\sinh t}{t}=\sum_{n=1}^{\infty }\frac{2^{2n}B_{2n}}{2n\left(
2n\right) !}t^{2n}.  \label{lnsht/t-s}
\end{equation}%
Moreover, for $n\in \mathbb{N}$, the double inequality%
\begin{equation}
\sum_{k=1}^{2n}\frac{2^{2k}B_{2k}}{2k\left( 2k\right) !}t^{2k}<\ln \frac{%
\sinh t}{t}<\sum_{k=1}^{2n-1}\frac{2^{2k}B_{2k}}{2k\left( 2k\right) !}t^{2k}
\label{lnsht/t-s<>}
\end{equation}%
holds for all $t>0$.
\end{lemma}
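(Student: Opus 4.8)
The plan is to reduce everything to the classical partial–fraction (Mittag–Leffler) expansion of the hyperbolic cotangent together with an integration trick, the key observation being that
\[
\frac{d}{dt}\ln\frac{\sinh t}{t}=\coth t-\frac1t .
\]
First I would establish the series \eqref{lnsht/t-s}. From the generating function $\tfrac{z}{e^{z}-1}=\sum_{n\ge 0}\tfrac{B_n}{n!}z^{n}$ one obtains the well-known expansion
\[
\coth t-\frac1t=\sum_{n=1}^{\infty}\frac{2^{2n}B_{2n}}{(2n)!}\,t^{2n-1}\qquad(0<|t|<\pi).
\]
Since $\ln\frac{\sinh t}{t}\to 0$ as $t\to 0$ and its derivative is precisely $\coth t-1/t$, integrating the last display term by term from $0$ to $t$ and using $\int_0^t s^{2n-1}\,ds=t^{2n}/(2n)$ yields \eqref{lnsht/t-s}; the radius $\pi$ is the distance from $0$ to the nearest zeros $t=\pm i\pi$ of $\sinh t$.

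The main obstacle is the double inequality \eqref{lnsht/t-s<>}, because it is asserted for \emph{all} $t>0$, whereas the series in \eqref{lnsht/t-s} diverges once $t\ge\pi$, so there the partial sums cannot be compared against a convergent tail. To treat all $t>0$ uniformly I would abandon the power series and use instead the partial–fraction expansion
\[
\coth t-\frac1t=\sum_{m=1}^{\infty}\frac{2t}{t^{2}+m^{2}\pi^{2}},\qquad t>0,
\]
combined with the finite geometric identity carrying an explicit remainder,
\[
\frac{1}{1+x}=\sum_{j=0}^{N-1}(-1)^{j}x^{j}+\frac{(-1)^{N}x^{N}}{1+x},\qquad x=\frac{t^{2}}{m^{2}\pi^{2}} .
\]
Substituting and summing over $m$ by means of Euler's formula $\zeta(2k)=\frac{(-1)^{k+1}B_{2k}(2\pi)^{2k}}{2(2k)!}$, the polynomial part reproduces exactly $\sum_{k=1}^{N}\frac{2^{2k}B_{2k}}{(2k)!}t^{2k-1}$, while the remainder
\[
\rho_N(t)=(-1)^{N}\sum_{m=1}^{\infty}\frac{2\,t^{2N+1}}{m^{2N}\pi^{2N}\left(t^{2}+m^{2}\pi^{2}\right)}
\]
is $(-1)^N$ times a sum of strictly positive terms, so $\operatorname{sign}\rho_N(t)=(-1)^{N}$ for every $t>0$. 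Choosing $N=2n$ and $N=2n-1$ then gives the enveloping inequalities
\[
\sum_{k=1}^{2n}\frac{2^{2k}B_{2k}}{(2k)!}t^{2k-1}<\coth t-\frac1t<\sum_{k=1}^{2n-1}\frac{2^{2k}B_{2k}}{(2k)!}t^{2k-1}\qquad(t>0).
\]

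Finally I would integrate this chain from $0$ to $t$. Since $\int_0^t(\coth s-1/s)\,ds=\ln\frac{\sinh t}{t}$ and $\int_0^t s^{2k-1}\,ds=t^{2k}/(2k)$, the three members become exactly the two finite sums and $\ln\frac{\sinh t}{t}$ appearing in \eqref{lnsht/t-s<>}, and strictness is preserved because the integrand inequalities are strict on $(0,t)$. The only delicate points are justifying the term-by-term operations (uniform convergence of the partial–fraction series on compact subsets of $(0,\infty)$) and correctly bookkeeping the sign of the Bernoulli coefficients through Euler's $\zeta$-formula; as a byproduct, $\rho_N(t)\to 0$ for $t<\pi$ re-proves the convergent expansion \eqref{lnsht/t-s}.
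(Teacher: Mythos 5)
Your proof is correct, and its second half is genuinely different from (and more self-contained than) the paper's. For the expansion (\ref{lnsht/t-s}) you do exactly what the paper does: take the Bernoulli series for $\coth t-1/t$ on $|t|<\pi$ and integrate it term by term from $0$, using $\ln (\sinh t/t)\rightarrow 0$ as $t\rightarrow 0$. The real divergence is at the double inequality (\ref{lnsht/t-s<>}), which the paper does not prove at all --- it is simply quoted from \cite[Corollary 1]{Yang-JMAA-441-2016} --- whereas you supply a complete argument. Your mechanism checks out: substituting $\frac{1}{1+x}=\sum_{j=0}^{N-1}(-1)^{j}x^{j}+\frac{(-1)^{N}x^{N}}{1+x}$ with $x=t^{2}/(m^{2}\pi ^{2})$ into the partial-fraction expansion $\coth t-1/t=\sum_{m\geq 1}2t/(t^{2}+m^{2}\pi ^{2})$ and summing over $m$ (the interchange is harmless because the sum over $j$ is finite and each $\sum_{m}m^{-2j-2}$ converges), the factor $(-1)^{j}$ from the geometric series cancels the factor $(-1)^{j+2}$ in Euler's formula $\zeta (2j+2)=(-1)^{j+2}B_{2j+2}(2\pi )^{2j+2}/\left( 2(2j+2)!\right) $, so the polynomial part is exactly $\sum_{k=1}^{N}2^{2k}B_{2k}t^{2k-1}/(2k)!$, while the remainder $\rho _{N}(t)$ is $(-1)^{N}$ times a sum of strictly positive terms for every $t>0$. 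Taking $N=2n$ and $N=2n-1$ and integrating the resulting strict inequalities from $0$ to $t$ (strictness survives, the integrands being continuous and strictly ordered on $(0,t)$) yields precisely (\ref{lnsht/t-s<>}). What your route buys is exactly what a power-series argument cannot give --- validity for all $t>0$, beyond the radius of convergence, the obstacle you correctly identified --- together with an explicit remainder formula; what the paper's citation buys is only brevity. Your closing remark, that $\rho _{N}(t)\rightarrow 0$ for $0<t<\pi $ re-proves (\ref{lnsht/t-s}), is also correct.
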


\begin{proof}
It was listed in \cite[p. 85, Eq. (4.5.64), (4.5.65), (4.5.67)]%
{Abramowttz-1972} that%
\begin{equation*}
\coth t=\sum_{n=0}^{\infty }\frac{2^{2n}B_{2n}}{\left( 2n\right) !}t^{2n-1}\
\ \ \left\vert t\right\vert <\pi .
\end{equation*}%
Then we obtain that for $\left\vert t\right\vert <\pi $,%
\begin{equation*}
\ln \frac{\sinh t}{t}=\int_{0}^{t}\left( \coth x-\frac{1}{x}\right)
dx=\int_{0}^{t}\left( \sum_{n=0}^{\infty }\frac{2^{2n}B_{2n}}{\left(
2n\right) !}t^{2n-1}-\frac{1}{x}\right) dx=\sum_{n=1}^{\infty }\frac{%
2^{2n}B_{2n}}{2n\left( 2n\right) !}t^{2n}.
\end{equation*}%
While the double inquality (\ref{lnsht/t-s<>}) was proved in \cite[Corollary
1]{Yang-JMAA-441-2016}, which completes the proof.
\end{proof}

\begin{theorem}
\label{MT-W0}As $x\rightarrow \infty $, the asymptotic expansion%
\begin{equation*}
\Gamma \left( x+1\right) \thicksim \sqrt{2\pi x}\left( \dfrac{x}{e}\right)
^{x}\left( x\sinh \frac{1}{x}\right) ^{x/2}\exp \left( \sum_{n=3}^{\infty }%
\frac{a_{n}}{x^{2n-1}}\right) 
\end{equation*}%
\begin{equation*}
=\sqrt{2\pi x}\left( \dfrac{x}{e}\right) ^{x}\left( x\sinh \frac{1}{x}%
\right) ^{x/2}\exp \left[ \frac{1}{1620x^{5}}-\frac{11}{18\,900x^{7}}+\frac{%
143}{170\,100x^{9}}-\frac{2260\,261}{1178\,793\,000x^{11}}+\cdot \cdot \cdot %
\right] 
\end{equation*}%
holds with%
\begin{equation}
a_{n}=\frac{2n\left( 2n-2\right) !-2^{2n-1}}{2n\left( 2n\right) !}B_{2n},
\label{an}
\end{equation}%
where $B_{2n}$ is the Bernoulli number.
\end{theorem}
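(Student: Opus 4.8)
The plan is to reduce the claim to its logarithm and then play off the two series already in hand: the Stirling series $(\ref{Stirling})$ and the expansion of $\ln(\sinh t/t)$ supplied by Lemma \ref{L-lnsh/t-ae}. Taking logarithms, the asserted expansion is equivalent to
\[
\ln \Gamma (x+1)-\ln \!\left[ \sqrt{2\pi x}\left( \tfrac{x}{e}\right) ^{x}\right] -\frac{x}{2}\ln \!\left( x\sinh \tfrac{1}{x}\right) \thicksim \sum_{n=3}^{\infty }\frac{a_{n}}{x^{2n-1}}
\]
as $x\rightarrow \infty $. First I would record, directly from $(\ref{Stirling})$, that the first difference has the (divergent) asymptotic expansion $\sum_{n=1}^{\infty }\frac{B_{2n}}{2n(2n-1)}x^{-(2n-1)}$.

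Next I would rewrite $x\sinh (1/x)=\sinh (1/x)\big/(1/x)$ and apply Lemma \ref{L-lnsh/t-ae} with $t=1/x$, valid once $x>1/\pi$, to get the convergent series $\ln \!\left( x\sinh \tfrac{1}{x}\right) =\sum_{n=1}^{\infty }\frac{2^{2n}B_{2n}}{2n(2n)!}x^{-2n}$. Multiplying by $x/2$ gives $\frac{x}{2}\ln \!\left( x\sinh \tfrac{1}{x}\right) =\sum_{n=1}^{\infty }\frac{2^{2n-1}B_{2n}}{2n(2n)!}x^{-(2n-1)}$. Because this series converges for large $x$, its partial sums are themselves an asymptotic expansion of it (the tail is $O(x^{-(2N+1)})$), so subtracting it term-by-term from the Stirling expansion is legitimate and produces an asymptotic expansion of the left-hand side with coefficients
\[
a_{n}=\frac{B_{2n}}{2n(2n-1)}-\frac{2^{2n-1}B_{2n}}{2n(2n)!}.
\]

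The remainder is pure algebra. Factoring out $B_{2n}/(2n)$ and using the identity $\frac{1}{2n-1}=\frac{2n(2n-2)!}{(2n)!}$ (which follows from $(2n)!=(2n)(2n-1)(2n-2)!$) collapses the bracket over the common denominator $(2n)!$ and reproduces exactly the closed form $(\ref{an})$. Finally I would confirm that the expansion genuinely begins at $n=3$ by checking that the numerator $2n(2n-2)!-2^{2n-1}$ vanishes for the first two indices: at $n=1$ it is $2\cdot 0!-2^{1}=0$, and at $n=2$ it is $4\cdot 2!-2^{3}=0$, so $a_{1}=a_{2}=0$. A one-line sanity check at $n=3$ returns $a_{3}=1/1620$, matching the displayed leading term.

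I do not expect a serious obstacle here, since both analytic ingredients are already established; the only real subtlety is the bookkeeping that keeps straight the divergent Stirling series against the convergent $\sinh$-series when forming their difference. Conceptually, the heart of the matter is the cancellation $a_{1}=a_{2}=0$: it records that Windschitl's factor $\left( x\sinh \tfrac{1}{x}\right) ^{x/2}$ already reproduces the Stirling series through order $x^{-3}$, which is precisely what accounts for the $x^{-5}$ convergence rate of $W_{0}$ noted after $(\ref{W0})$.
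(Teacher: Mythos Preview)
Your proposal is correct and follows essentially the same route as the paper: both take logarithms, invoke the Stirling series $(\ref{Stirling})$ together with Lemma~\ref{L-lnsh/t-ae} at $t=1/x$, subtract termwise to obtain $a_{n}=\frac{B_{2n}}{2n(2n-1)}-\frac{2^{2n-1}B_{2n}}{2n(2n)!}$, and then verify $a_{1}=a_{2}=0$. Your write-up is slightly more explicit about the algebraic reduction to $(\ref{an})$ and about why the term-by-term subtraction of a convergent series from a divergent asymptotic series is legitimate, but the underlying argument is identical.
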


\begin{proof}
By the asymptotic expansion (\ref{Stirling}) and Lemma \ref{L-lnsh/t-ae} we
have that as $x\rightarrow \infty $,%
\begin{eqnarray*}
\ln \Gamma \left( x+1\right) -\ln \sqrt{2\pi }-\left( x+\frac{1}{2}\right)
\ln x+x &\thicksim &\sum_{n=1}^{\infty }\frac{a_{n}^{\prime }}{x^{2n-1}}, \\
\frac{x}{2}\ln \left( x\sinh \frac{1}{x}\right) &=&\frac{1}{2}%
\sum_{n=1}^{\infty }\frac{a_{n}^{\prime \prime }}{x^{2n-1}},
\end{eqnarray*}%
where%
\begin{equation*}
a_{n}^{\prime }=\frac{B_{2n}}{2n\left( 2n-1\right) }\text{ \ and \ }%
a_{n}^{\prime \prime }=\frac{2^{2n}B_{2n}}{2n\left( 2n\right) !}.
\end{equation*}

Let%
\begin{equation*}
\Gamma \left( x+1\right) \thicksim \sqrt{2\pi x}\left( \dfrac{x}{e}\right)
^{x}\left( x\sinh \frac{1}{x}\right) ^{x/2}\exp \left( w_{0}\left( x\right)
\right) \text{ as }x\rightarrow \infty .
\end{equation*}%
Then we have that as $x\rightarrow \infty $,%
\begin{eqnarray*}
w_{0}\left( x\right) &=&\left( \ln \Gamma \left( x+1\right) -\ln \sqrt{2\pi }%
-\left( x+\frac{1}{2}\right) \ln x+x\right) -\frac{x}{2}\ln \left( x\sinh 
\frac{1}{x}\right) \\
&=&\sum_{n=1}^{\infty }\frac{a_{n}^{\prime }}{x^{2n-1}}-\frac{1}{2}%
\sum_{n=1}^{\infty }\frac{a_{n}^{\prime \prime }}{x^{2n-1}}%
=\sum_{n=1}^{\infty }\frac{a_{n}^{\prime }-a_{n}^{\prime \prime }/2}{x^{2n-1}%
}=\sum_{n=1}^{\infty }\frac{a_{n}}{x^{2n-1}}.
\end{eqnarray*}%
An easy computation yields $a_{1}=a_{2}=0$ and%
\begin{equation*}
a_{3}=\frac{1}{1620}\text{, \ }a_{4}=-\frac{11}{18\,900}\text{, \ }a_{5}=%
\frac{143}{170\,100}\text{, \ }a_{6}=-\frac{2260\,261}{1178\,793\,000}
\end{equation*}%
which completes the proof.
\end{proof}

The following theorem offers an estimate of remainder in asymptotic
expansion (\ref{g-ae-W0}).

\begin{theorem}
\label{MT-W0-Rn}For $n\in \mathbb{N}$ with $n\geq 4$, let%
\begin{equation*}
\Gamma \left( x+1\right) =\sqrt{2\pi x}\left( \dfrac{x}{e}\right) ^{x}\left(
x\sinh \frac{1}{x}\right) ^{x/2}\exp \left( \sum_{k=3}^{n-1}\frac{a_{k}}{%
x^{2k-1}}+R_{n}\left( x\right) \right) ,
\end{equation*}%
where $a_{k}$ is given by (\ref{an}). Then we have%
\begin{equation*}
\left\vert R_{n}\left( x\right) \right\vert \leq \frac{\left\vert
B_{2n}\right\vert }{2n\left( 2n-1\right) }\frac{1}{x^{2n-1}}
\end{equation*}%
for all $x>0$.
\end{theorem}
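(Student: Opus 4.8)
The plan is to recycle the splitting from the proof of Theorem~\ref{MT-W0} and then estimate its two pieces separately against the two double inequalities already in hand. Write
\[
F(x)=\ln\Gamma(x+1)-\ln\sqrt{2\pi}-\Bigl(x+\tfrac12\Bigr)\ln x+x,\qquad G(x)=\frac{x}{2}\ln\Bigl(x\sinh\tfrac1x\Bigr),
\]
so that $w_0(x)=F(x)-G(x)$. Since $a_1=a_2=0$, the finite sum in the statement may be extended down to $k=1$ without changing its value, and then split as $\sum_{k=1}^{n-1}a_k'/x^{2k-1}-\tfrac12\sum_{k=1}^{n-1}a_k''/x^{2k-1}$, with $a_k'=B_{2k}/(2k(2k-1))$ and $a_k''=2^{2k}B_{2k}/(2k(2k)!)$ as in the proof of Theorem~\ref{MT-W0}. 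Subtracting, I would write
\[
R_n(x)=P_{n-1}(x)-Q_{n-1}(x),
\]
where $P_m(x)=F(x)-\sum_{k=1}^{m}a_k'/x^{2k-1}$ is the tail of the Stirling series and $Q_m(x)=G(x)-\tfrac12\sum_{k=1}^{m}a_k''/x^{2k-1}$ is the tail of the $\ln(\sinh t/t)$ series evaluated at $t=1/x$.

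The core of the argument is to bound each tail by its first omitted term. Taking logarithms in (\ref{g><}) shows that $F(x)$ lies strictly between every pair of consecutive partial sums of its series; because $B_{2k}$ has sign $(-1)^{k+1}$, those terms alternate in sign, and a short bracketing argument then gives that $P_m(x)$ carries the sign of the first omitted term while $|P_m(x)|$ is dominated by its modulus. Applying the same reasoning to $G(x)$ via (\ref{lnsht/t-s<>}) --- multiplied by $x/2$ with $t=1/x$ --- yields the analogous conclusion for $Q_m(x)$. At $m=n-1$ this produces
\[
|P_{n-1}(x)|<\frac{|B_{2n}|}{2n(2n-1)}\frac{1}{x^{2n-1}},\qquad |Q_{n-1}(x)|<\frac{2^{2n-1}|B_{2n}|}{2n(2n)!}\frac{1}{x^{2n-1}}.
\]

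The decisive point is that both first omitted terms share the sign of $B_{2n}$, so $P_{n-1}(x)$ and $Q_{n-1}(x)$ have the \emph{same} sign; this replaces the crude triangle bound $|P_{n-1}|+|Q_{n-1}|$ by the sharper $|R_n(x)|\le\max\{|P_{n-1}(x)|,|Q_{n-1}(x)|\}$. It then remains to check that the Stirling tail dominates, which reduces to the elementary inequality $(2n)!\ge(2n-1)2^{2n-1}$, equivalently $(2n)(2n-2)!\ge2^{2n-1}$; this holds for all $n\ge3$ (hence for $n\ge4$) since $(2n-2)!\ge2^{2n-2}$ once $2n-2\ge4$. Combining the last three displays gives $|R_n(x)|\le|B_{2n}|/(2n(2n-1)x^{2n-1})$ for all $x>0$, as claimed.

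I expect the sign bookkeeping to be the main obstacle. The clean constant $|B_{2n}|/(2n(2n-1))$ survives only because $P_{n-1}$ and $Q_{n-1}$ carry a common sign, turning a sum of two bounds into their maximum; establishing this common sign, rather than merely the size of each tail, is the step demanding care, as it rests simultaneously on the alternating structure of both series and on each partial sum bracketing its value on the correct side.
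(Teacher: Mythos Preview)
Your proposal is correct and follows essentially the same route as the paper: split $R_n$ into the Stirling tail $P_{n-1}$ and the $\ln(\sinh t/t)$ tail $Q_{n-1}$, invoke (\ref{g><}) and (\ref{lnsht/t-s<>}) to bound each tail by its first omitted term, exploit the common sign to replace a sum of bounds by their maximum, and finish with the elementary comparison $(2n)!\ge(2n-1)2^{2n-1}$. The paper organizes the middle step slightly differently---it treats $n$ odd and $n$ even separately and writes explicit two-sided bounds $-\tfrac12 a_n''/x^{2n-1}$ and $a_n'/x^{2n-1}$ on $R_n$ rather than invoking the ``same sign $\Rightarrow$ $\max$'' abstraction---but the substance is identical.
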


\begin{proof}
We have%
\begin{equation*}
R_{n}\left( x\right) =\ln \Gamma \left( x+1\right) -\ln \sqrt{2\pi }-\left(
x+\frac{1}{2}\right) \ln x+x-\frac{x}{2}\ln \left( x\sinh \frac{1}{x}\right)
-\sum_{k=3}^{n-1}\frac{a_{k}}{x^{2k-1}}.
\end{equation*}

If $n=2m+1$ for $m\geq 2$ then by inequalities (\ref{g><}) and (\ref%
{lnsht/t-s<>}) we have%
\begin{eqnarray*}
R_{2m}\left( x\right)  &=&\left( \ln \Gamma \left( x+1\right) -\ln \sqrt{%
2\pi }-\left( x+\frac{1}{2}\right) \ln x+x\right) -\frac{x}{2}\ln \left(
x\sinh \frac{1}{x}\right) -\sum_{k=3}^{2m}\frac{a_{k}}{x^{2k-1}} \\
&>&\sum_{k=1}^{2m}\dfrac{a_{k}^{\prime }}{x^{2k-1}}-\frac{1}{2}%
\sum_{k=1}^{2m+1}\frac{a_{k}^{\prime \prime }}{x^{2k-1}}-\sum_{k=3}^{2m}%
\frac{a_{m}}{x^{2k-1}}=-\frac{1}{2}\frac{a_{2m+1}^{\prime \prime }}{x^{4m+1}}%
,
\end{eqnarray*}%
\begin{equation*}
R_{2m}\left( x\right) <\sum_{k=1}^{2m+1}\dfrac{a_{k}^{\prime }}{x^{2k-1}}-%
\frac{1}{2}\sum_{k=1}^{2m}\frac{\ddot{a}_{k}}{x^{2k-1}}-\sum_{k=3}^{2m}\frac{%
a_{m}}{x^{2k-1}}=\frac{a_{2m+1}^{\prime }}{x^{4m+1}},
\end{equation*}%
where the last eqaulities in the above two inequalities hold due to $%
a_{k}^{\prime }-a_{k}^{\prime \prime }/2=a_{k}$. It then follows that%
\begin{equation*}
\left\vert R_{n}\left( x\right) \right\vert <\max \left( \left\vert -\frac{1%
}{2}\frac{a_{2m+1}^{\prime \prime }}{x^{4m+1}}\right\vert ,\left\vert \frac{%
a_{2m+1}^{\prime }}{x^{4m+1}}\right\vert \right) =\max \left( \frac{1}{2}%
\left\vert a_{n}^{\prime \prime }\right\vert ,\left\vert a_{n}^{\prime
}\right\vert \right) \frac{1}{x^{2n-1}}.
\end{equation*}%
By a similar verification, it is also true if $n=2m$ for $m\geq 2$.

Since%
\begin{eqnarray*}
\frac{\left\vert a_{n}^{\prime }\right\vert }{\left\vert a_{n}^{\prime
\prime }/2\right\vert } &=&\left\vert \frac{B_{2n}}{2n\left( 2n-1\right) }%
\right\vert \left/ \left\vert \frac{1}{2}\frac{2^{2n}B_{2n}}{2n\left(
2n\right) !}\right\vert \right. =2\frac{\left( 2n\right) !}{\left(
2n-1\right) 2^{2n}}:=a_{n}^{\prime \prime \prime }, \\
\frac{a_{n+1}^{\prime \prime \prime }}{a_{n}^{\prime \prime \prime }}-1 &=&%
\frac{1}{2}\left( 2n+3\right) \left( n-1\right) >0,
\end{eqnarray*}%
it is derived that $a_{n}^{\prime \prime \prime }>a_{4}^{\prime \prime
\prime }=45$ for $n\geq 4$, so we obtain%
\begin{equation*}
\max \left( \frac{1}{2}\left\vert a_{n}^{\prime \prime }\right\vert
,\left\vert a_{n}^{\prime }\right\vert \right) =\left\vert a_{n}^{\prime
}\right\vert =\frac{\left\vert B_{2n}\right\vert }{2n\left( 2n-1\right) },
\end{equation*}%
which completes the proof.
\end{proof}

\begin{remark}
Since $B_{2n+1}=0$ for $n\in \mathbb{N}$, the asymptotic series $w_{0}\left(
x\right) $ can also be written as%
\begin{equation*}
w_{0}\left( x\right) =\sum_{n=3}^{\infty }\frac{2n\left( 2n-2\right)
!-2^{2n-1}}{2n\left( 2n\right) !}\frac{B_{2n}}{x^{2n-1}}=\sum_{n=1}^{\infty }%
\frac{\left( n+1\right) \left( n-1\right) !-2^{n}}{\left( n+1\right) \left(
n+1\right) !}\frac{B_{n+1}}{x^{n}}:=\sum_{n=1}^{\infty }\frac{a_{n}^{\ast }}{%
x^{n}},
\end{equation*}%
where%
\begin{equation}
a_{n}^{\ast }=\frac{\left( n+1\right) \left( n-1\right) !-2^{n}}{\left(
n+1\right) \left( n+1\right) !}B_{n+1}.  \label{an*}
\end{equation}
\end{remark}

Now we establish the second Windschitl type asymptotic series for gamma
function.

\begin{theorem}
\label{MT-W0*}As $x\rightarrow \infty $, the asymptotic expansion (\ref%
{g-ae-W1})%
\begin{equation*}
\Gamma \left( x+1\right) \thicksim \sqrt{2\pi x}\left( \dfrac{x}{e}\right)
^{x}\left( x\sinh \frac{1}{x}\right) ^{x/2}\left( 1+\sum_{n=1}^{\infty }%
\frac{b_{n}}{x^{n}}\right)
\end{equation*}%
\begin{equation*}
=\sqrt{2\pi x}\left( \dfrac{x}{e}\right) ^{x}\left( x\sinh \frac{1}{x}%
\right) ^{x/2}\left( 1+\frac{1}{1620x^{5}}-\frac{11}{18\,900x^{7}}+\frac{143%
}{170\,100x^{9}}+\frac{1}{5248\,800x^{10}}+\cdot \cdot \cdot \right)
\end{equation*}%
holds with $b_{0}=1$, $b_{1}=b_{2}=b_{3}=b_{4}=0$ and for $n\geq 5$,%
\begin{equation}
b_{n}=\frac{1}{n}\sum_{k=1}^{n}\left( \frac{1}{k+1}-\frac{2^{k}}{\left(
k+1\right) ^{2}\left( k-1\right) !}\right) B_{k+1}b_{n-k}.  \label{bn}
\end{equation}
\end{theorem}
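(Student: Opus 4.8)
The plan is to recognize the claimed recursion as the classical identity relating the Taylor coefficients of a formal power series to those of its exponential, applied to the series $w_0$. By Theorem \ref{MT-W0} together with the preceding Remark, we have, as $x\rightarrow \infty$,
\[
1+\sum_{n=1}^{\infty }\frac{b_{n}}{x^{n}}\thicksim \exp \left( w_{0}\left( x\right) \right) =\exp \left( \sum_{n=1}^{\infty }\frac{a_{n}^{\ast }}{x^{n}}\right) ,
\]
where $a_{n}^{\ast }$ is given by (\ref{an*}). Setting $t=1/x$ and treating both sides as formal power series, I would write $F\left( t\right) =\sum_{n\geq 0}b_{n}t^{n}$ with $b_{0}=1$ and $G\left( t\right) =\sum_{n\geq 1}a_{n}^{\ast }t^{n}$, so that $F=\exp \left( G\right) $.

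First I would differentiate the relation $F=\exp \left( G\right) $ to obtain $F^{\prime }=G^{\prime }F$, and compare the coefficient of $t^{n-1}$ on both sides. This yields the recursion
\[
nb_{n}=\sum_{k=1}^{n}k\,a_{k}^{\ast }\,b_{n-k},\qquad n\geq 1,
\]
which is the standard relation for the coefficients of the exponential of a power series with no constant term.

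The remaining work is purely algebraic. Using $\left( k+1\right) !=\left( k+1\right) k\left( k-1\right) !$ in (\ref{an*}), a short computation splits $ka_{k}^{\ast }$ into two pieces,
\[
k\,a_{k}^{\ast }=\left( \frac{1}{k+1}-\frac{2^{k}}{\left( k+1\right) ^{2}\left( k-1\right) !}\right) B_{k+1},
\]
so that the recursion above becomes exactly (\ref{bn}). Finally, the initial values follow from $a_{1}^{\ast }=a_{2}^{\ast }=a_{3}^{\ast }=a_{4}^{\ast }=0$ (equivalently, from the fact that $w_{0}$ begins with the term in $x^{-5}$, by Theorem \ref{MT-W0}): substituting these into the recursion forces $b_{1}=b_{2}=b_{3}=b_{4}=0$, while $b_{0}=1$ by construction.

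I expect no serious obstacle here; the only points requiring care are the bookkeeping in verifying the identity $k\,a_{k}^{\ast }=\left( \tfrac{1}{k+1}-\tfrac{2^{k}}{\left( k+1\right) ^{2}\left( k-1\right) !}\right) B_{k+1}$, and the (routine but worth stating) justification that the asymptotic expansion may be handled as a formal power series in $1/x$, so that term-by-term differentiation and the exponential-series recursion are legitimate at the level of coefficients.
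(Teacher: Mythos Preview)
Your proposal is correct and follows essentially the same approach as the paper. The only cosmetic difference is that the paper quotes the recursion $b_n=\tfrac{1}{n}\sum_{k=1}^n k a_k b_{n-k}$ as a known lemma \cite[Lemma 3]{Chen-JCA-2-2013}, whereas you derive it directly by differentiating $F=\exp(G)$; the substitution of $a_k^{\ast}$ from (\ref{an*}) and the verification of the first few $b_n$ then proceed identically.
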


\begin{proof}
It was proved in \cite[Lemma 3]{Chen-JCA-2-2013} that as $x\rightarrow
\infty $,%
\begin{equation*}
\exp \left( \sum_{n=1}^{\infty }a_{n}x^{-n}\right) \thicksim
\sum_{n=0}^{\infty }b_{n}x^{-n}
\end{equation*}%
with $b_{0}=1$ and%
\begin{equation}
b_{n}=\frac{1}{n}\sum_{k=1}^{n}ka_{k}b_{n-k}\text{ \ for }n\geq 1.
\label{bn=}
\end{equation}

Substituting $a_{k}^{\ast }$ given in (\ref{an*}) into (\ref{bn=}) gives
recurrence formula (\ref{bn}).

An easy verification shows that $b_{n}=0$ for $1\leq n\leq 4$, $%
b_{6}=b_{8}=0 $ and%
\begin{equation*}
b_{5}=\frac{1}{1620}\text{, }b_{7}=-\frac{11}{18\,900}\text{, }b_{9}=\frac{%
143}{170\,100}\text{, }b_{10}=\frac{1}{5248\,800}\text{,}
\end{equation*}%
which completes the proof.
\end{proof}

The following theorem improves Chen's result \cite[Theorem 2]%
{Chen-AMC-245-2014}.

\begin{theorem}
As $x\rightarrow \infty $, the asymptotic expansion%
\begin{equation}
\Gamma \left( x+1\right) \thicksim \sqrt{2\pi x}\left( \dfrac{x}{e}\right)
^{x}\left( x\sinh \frac{1}{x}\right) ^{\left( x/2\right) \left(
1+\sum_{n=2}^{\infty }c_{n}x^{-2n}\right) }  \label{g-ae-C}
\end{equation}%
\begin{equation*}
=\sqrt{2\pi x}\left( \dfrac{x}{e}\right) ^{x}\left( x\sinh \frac{1}{x}%
\right) ^{\frac{x}{2}\left( 1+\frac{1}{135x^{4}}-\frac{191}{28\,350x^{6}}+%
\frac{25\,127}{2551\,500x^{8}}-\frac{19\,084\,273}{841\,995\,000x^{10}}%
+\cdot \cdot \cdot \right) }
\end{equation*}%
holds with $c_{0}=1$, $c_{1}=0$ and for $n\geq 2$,%
\begin{equation*}
c_{n}=\frac{6B_{2n+2}}{\left( n+1\right) \left( 2n+1\right) }-6\sum_{k=1}^{n}%
\frac{2^{2k+2}B_{2k+2}}{2\left( k+1\right) \left( 2k+2\right) !}c_{n-k}.
\end{equation*}
\end{theorem}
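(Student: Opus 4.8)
The plan is to proceed exactly as in the proofs of Theorems \ref{MT-W0} and \ref{MT-W0*}: pass to logarithms, express both sides as asymptotic series in odd powers of $1/x$, and match coefficients. Writing $\phi \left( x\right) =1+\sum_{n=2}^{\infty }c_{n}x^{-2n}$, the relation defining the expansion (\ref{g-ae-C}) is
\[
\ln \Gamma \left( x+1\right) -\ln \sqrt{2\pi }-\left( x+\tfrac{1}{2}\right) \ln x+x=\frac{x}{2}\,\phi \left( x\right) \ln \left( x\sinh \tfrac{1}{x}\right) .
\]
By Stirling's series (\ref{Stirling}) the left-hand side is asymptotic to $\sum_{n\geq 1}a_{n}^{\prime }x^{-(2n-1)}$ with $a_{n}^{\prime }=B_{2n}/[2n(2n-1)]$, while Lemma \ref{L-lnsh/t-ae} (with $t=1/x$) gives $\tfrac{x}{2}\ln (x\sinh \tfrac{1}{x})=\tfrac{1}{2}\sum_{n\geq 1}a_{n}^{\prime \prime }x^{-(2n-1)}$ with $a_{n}^{\prime \prime }=2^{2n}B_{2n}/[2n(2n)!]$. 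These are precisely the quantities already introduced in the proof of Theorem \ref{MT-W0}.

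First I would note that the factor $\tfrac{x}{2}\ln (x\sinh \tfrac{1}{x})$ is positive and has nonvanishing leading term $\tfrac{1}{12}x^{-1}$, so $\phi $ is a genuine function for large $x$ whose asymptotic expansion is obtained by dividing the two series; equivalently, $\phi $ is determined by
\[
\Bigl(\sum_{m\geq 0}c_{m}x^{-2m}\Bigr)\Bigl(\sum_{n\geq 1}a_{n}^{\prime \prime }x^{-(2n-1)}\Bigr)=2\sum_{n\geq 1}a_{n}^{\prime }x^{-(2n-1)},\qquad c_{0}=1.
\]
That this formal division yields a bona fide asymptotic expansion follows from the standard algebra of such series (products and quotients being again asymptotic series when the denominator has nonzero leading coefficient), the same mechanism exploited in the proof of Theorem \ref{MT-W0*}. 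Moreover, since both series carry only the odd powers $x^{-(2n-1)}$, extracting the common factor $x^{-1}$ exhibits $\phi $ as a power series in $x^{-2}$, which is consistent with the claimed shape $1+\sum_{n\geq 2}c_{n}x^{-2n}$.

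Next I would equate the coefficient of $x^{-(2N-1)}$ on both sides. Because $x^{-2m}$ times $x^{-(2n-1)}$ contributes to $x^{-(2N-1)}$ exactly when $m+n=N$, this yields $\sum_{m=0}^{N-1}c_{m}a_{N-m}^{\prime \prime }=2a_{N}^{\prime }$. The case $N=1$ reads $c_{0}a_{1}^{\prime \prime }=2a_{1}^{\prime }$ and confirms $c_{0}=1$ since $a_{1}^{\prime \prime }=1/6$ and $2a_{1}^{\prime }=1/6$. For $N=n+1\geq 2$ I would isolate the $m=n$ term, which carries $a_{1}^{\prime \prime }=1/6$, and solve for $c_{n}$:
\[
c_{n}=6\Bigl(2a_{n+1}^{\prime }-\sum_{m=0}^{n-1}c_{m}a_{n+1-m}^{\prime \prime }\Bigr).
\]
Substituting $2a_{n+1}^{\prime }=B_{2n+2}/[(n+1)(2n+1)]$ produces the first term $6B_{2n+2}/[(n+1)(2n+1)]$, and reindexing the convolution via $k=n-m$ turns the remaining sum into $\sum_{k=1}^{n}a_{k+1}^{\prime \prime }c_{n-k}$ with $a_{k+1}^{\prime \prime }=2^{2k+2}B_{2k+2}/[2(k+1)(2k+2)!]$, which is exactly the stated recurrence. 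Finally I would evaluate it for small $n$ to read off $c_{1}=0$ and the displayed values $c_{2}=1/135$, $c_{3}=-191/28\,350$, and so on.

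The routine-looking but error-prone step, which I expect to be the main obstacle, is the reindexing of the Cauchy product so that the subtracted sum lands precisely on the indices $2^{2k+2}$, $(2k+2)!$, $B_{2k+2}$ and $c_{n-k}$ of the claimed formula, together with the bookkeeping needed to check that the $N=1$ normalization is consistent with $c_{0}=1$ and that $c_{1}$ indeed vanishes. By contrast, the only genuinely analytic point — the legitimacy of dividing the two asymptotic series — is guaranteed once one observes that the denominator's leading coefficient $\tfrac{1}{12}$ is nonzero.
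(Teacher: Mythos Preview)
Your proposal is correct and follows essentially the same route as the paper: take logarithms, invoke Stirling's series and the power-series expansion of $\ln(t^{-1}\sinh t)$, and match coefficients of the resulting Cauchy product to obtain the recurrence for $c_n$. The only cosmetic difference is that the paper starts from Chen's more general exponent $x/2+\sum_{j\geq 0}r_j x^{-j}$ and uses a parity argument to deduce that only even powers of $x^{-1}$ survive, whereas you posit the even form directly and justify it by observing that the quotient of two odd series in $1/x$ is even; the coefficient matching and index bookkeeping are otherwise identical.
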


\begin{proof}
The asymptotic expansion (\ref{g-ae-Ch}) can be written as%
\begin{equation*}
\ln \Gamma \left( x+1\right) -\ln \sqrt{2\pi x}-x\ln x+x\thicksim \left( 
\frac{x}{2}+\sum_{j=0}^{\infty }\frac{r_{j}}{x^{j}}\right) \ln \left( x\sinh 
\frac{1}{x}\right) ,
\end{equation*}%
which, by (\ref{Stirling}) and (\ref{lnsht/t-s}), is equivalent to%
\begin{equation*}
\sum_{n=1}^{\infty }\frac{B_{2n}}{2n\left( 2n-1\right) }\frac{1}{x^{2n-1}}%
\thicksim \left( \frac{x}{2}+\sum_{j=0}^{\infty }\frac{r_{j}}{x^{j}}\right)
\left( \sum_{n=1}^{\infty }\frac{2^{2n}B_{2n}}{2n\left( 2n\right) !}\frac{1}{%
x^{2n}}\right) .
\end{equation*}%
Since the left hand side one and the second factor of the right hand side
one are odd and even, respectively, the asymptotic expansion $%
x/2+\sum_{j=0}^{\infty }r_{j}x^{-j}$ has to be odd, and so $r_{2n}=0$ for $%
n\in \mathbb{N\cup \{}0\}$. Then the asymptotic expansion (\ref{g-ae-Ch})
has the form of (\ref{g-ae-C}), which is equivalent to%
\begin{equation*}
\sum_{n=1}^{\infty }\frac{B_{2n}}{2n\left( 2n-1\right) }\frac{1}{x^{2n-1}}%
\thicksim \frac{x}{2}\left( \sum_{n=0}^{\infty }\frac{c_{n}}{x^{2n}}\right)
\left( \sum_{n=1}^{\infty }\frac{2^{2n}B_{2n}}{2n\left( 2n\right) !}\frac{1}{%
x^{2n}}\right) .
\end{equation*}%
It can be written as%
\begin{equation*}
\sum_{n=0}^{\infty }\frac{B_{2n+2}}{\left( n+1\right) \left( 2n+1\right) }%
\frac{1}{x^{2n}}\thicksim \left( \sum_{n=0}^{\infty }\frac{c_{n}}{x^{2n}}%
\right) \left( \sum_{n=1}^{\infty }\frac{2^{2n+2}B_{2n+2}}{2\left(
n+1\right) \left( 2n+2\right) !}\frac{1}{x^{2n}}\right) .
\end{equation*}

Comprising coefficients of $x^{-2n}$ gives%
\begin{equation*}
\frac{B_{2n+2}}{\left( n+1\right) \left( 2n+1\right) }=\sum_{k=0}^{n}\frac{%
2^{2k+2}B_{2k+2}}{2\left( k+1\right) \left( 2k+2\right) !}c_{n-k},
\end{equation*}%
which yields $c_{0}=1$ and for $n\geq 1$,%
\begin{equation*}
c_{n}=\frac{6B_{2n+2}}{\left( n+1\right) \left( 2n+1\right) }-6\sum_{k=1}^{n}%
\frac{2^{2k+2}B_{2k+2}}{2\left( k+1\right) \left( 2k+2\right) !}c_{n-k}.
\end{equation*}%
A straightforward computation leads to 
\begin{equation*}
c_{1}=0\text{, }c_{2}=\frac{1}{135}\text{, }c_{3}=-\frac{191}{28\,350}\text{%
, }c_{4}=\frac{25\,127}{2551\,500}\text{, }c_{5}=-\frac{19\,084\,273}{%
841\,995\,000},
\end{equation*}%
which ends the proof.
\end{proof}

\begin{remark}
Chen's recurrence formula of coefficients $r_{j}$ given in \cite[Theorem 2]%
{Chen-AMC-245-2014} is somewhat complicated, since he was unaware of the
power series (\ref{lnsht/t-s}).
\end{remark}

\section{Numeric comparisons and inequalities}

If the series in (\ref{g-ae-W0}), (\ref{g-ae-W0*}), (\ref{g-ae-W1}) (\ref%
{g-ae-C}) are truncated at $n=3,5,3$, $2$, respectively, then we obtain four
Windschitl type approximation formulas:

\begin{eqnarray}
\Gamma \left( x+1\right) &\thicksim &\sqrt{2\pi x}\left( \dfrac{x}{e}\right)
^{x}\left( x\sinh \frac{1}{x}\right) ^{x/2}\exp \left( \frac{1}{1620x^{5}}%
\right) :=W_{01}\left( x\right) ,  \label{W01} \\
\Gamma \left( x+1\right) &\thicksim &\sqrt{2\pi x}\left( \dfrac{x}{e}\right)
^{x}\left( x\sinh \frac{1}{x}\right) ^{x/2}\left( 1+\frac{1}{1620x^{5}}%
\right) :=W_{01}^{\ast }\left( x\right) ,  \label{W01*} \\
\Gamma \left( x+1\right) &\thicksim &\sqrt{2\pi x}\left( \dfrac{x}{e}\right)
^{x}\left( x\sinh \frac{1}{x}+\frac{1}{810x^{6}}\right) ^{x/2}=W_{1}\left(
x\right) ,  \notag \\
\Gamma \left( x+1\right) &\thicksim &\sqrt{2\pi x}\left( \dfrac{x}{e}\right)
^{x}\left( x\sinh \frac{1}{x}\right) ^{\frac{x}{2}\left( 1+\frac{1}{135x^{4}}%
\right) }:=W_{c1}\left( x\right) ,  \label{Wc1}
\end{eqnarray}%
as $x\rightarrow \infty $. Also, we denote Lu et al.'s one \cite[Theorem 1.8]%
{Lu-JNT-140-2014} by%
\begin{equation}
W_{l1}\left( x\right) =\sqrt{2\pi x}\left( \dfrac{x}{e}\right) ^{x}\left(
x\sinh \left( \frac{1}{x}+\frac{1}{810x^{7}}\right) \right) ^{x/2}.
\label{Wl1}
\end{equation}%
In this section, we aim to compare the five closed approximation formulas
listed above.

We easily obtain%
\begin{eqnarray*}
\lim_{x\rightarrow \infty }\frac{\ln \Gamma \left( x+1\right) -\ln
W_{1}\left( x\right) }{x^{-7}} &=&-\frac{163}{340\,200}, \\
\lim_{x\rightarrow \infty }\frac{\ln \Gamma \left( x+1\right) -\ln
W_{c1}\left( x\right) }{x^{-7}} &=&-\frac{191}{340\,200},
\end{eqnarray*}%
\begin{eqnarray*}
\lim_{x\rightarrow \infty }\frac{\ln \Gamma \left( x+1\right) -\ln
W_{01}\left( x\right) }{x^{-7}} &=&\lim_{x\rightarrow \infty }\frac{\ln
\Gamma \left( x+1\right) -\ln W_{01}^{\ast }\left( x\right) }{x^{-7}}=-\frac{%
198}{340\,200}, \\
\lim_{x\rightarrow \infty }\frac{\ln \Gamma \left( x+1\right) -\ln
W_{c1}\left( x\right) }{x^{-7}} &=&-\frac{268}{340\,200}.
\end{eqnarray*}%
These show that the rates of the five approximation ones converging to $%
\Gamma \left( x+1\right) $ are all like $x^{-7}$ as $x\rightarrow \infty $,
and $W_{1}\left( x\right) $ may be the best one among all five approximation
ones, which can also be seen from the following Table 1.

\begin{equation*}
\text{Table 1: Comparisons among }W_{1}\left( x\right) ,W_{c1}\left(
x\right) ,W_{01}\left( x\right) ,W_{l1}\left( x\right)
\end{equation*}%
\begin{equation*}
\begin{tabular}{|l|l|l|l|l|}
\hline
$x$ & $\left\vert \frac{W_{1}\left( x\right) -\Gamma \left( x+1\right) }{%
\Gamma \left( x+1\right) }\right\vert $ & $\left\vert \frac{W_{c1}\left(
x\right) -\Gamma \left( x+1\right) }{\Gamma \left( x+1\right) }\right\vert $
& $\left\vert \frac{W_{01}\left( x\right) -\Gamma \left( x+1\right) }{\Gamma
\left( x+1\right) }\right\vert $ & $\left\vert \frac{W_{l1}\left( x\right)
-\Gamma \left( x+1\right) }{\Gamma \left( x+1\right) }\right\vert $ \\ \hline
$1$ & $1.832\times 10^{-4}$ & $2.562\times 10^{-4}$ & $2.754\times 10^{-4}$
& $4.686\times 10^{-4}$ \\ \hline
$2$ & $2.668\times 10^{-6}$ & $3.292\times 10^{-6}$ & $3.449\times 10^{-6}$
& $5.030\times 10^{-6}$ \\ \hline
$5$ & $5.743\times 10^{-9}$ & $6.791\times 10^{-9}$ & $7.054\times 10^{-9}$
& $9.681\times 10^{-9}$ \\ \hline
$10$ & $4.710\times 10^{-11}$ & $5.532\times 10^{-11}$ & $5.738\times
10^{-11}$ & $7.794\times 10^{-11}$ \\ \hline
$20$ & $3.727\times 10^{-13}$ & $4.370\times 10^{-13}$ & $4.531\times
10^{-13}$ & $6.138\times 10^{-13}$ \\ \hline
$50$ & $6.129\times 10^{-16}$ & $7.182\times 10^{-16}$ & $7.446\times
10^{-16}$ & $1.008\times 10^{-15}$ \\ \hline
$100$ & $4.791\times 10^{-18}$ & $5.614\times 10^{-18}$ & $5.819\times
10^{-18}$ & $7.877\times 10^{-18}$ \\ \hline
\end{tabular}%
\end{equation*}

More precisely, we have the following theorem.

\begin{theorem}
\label{T-f1}(i) The function%
\begin{equation*}
f_{1}\left( x\right) =\ln \Gamma \left( x+1\right) -\ln \sqrt{2\pi }-\left(
x+\frac{1}{2}\right) \ln x+x-\frac{x}{2}\ln \left( x\sinh \frac{1}{x}+\frac{1%
}{810x^{6}}\right)
\end{equation*}%
is strictly increasing and concave on $[1,\infty )$.

(ii) For $x\geq 1$, we have%
\begin{equation}
\begin{array}{l}
\beta _{0}\left( x\sinh \dfrac{1}{x}+\dfrac{1}{810x^{6}}\right) ^{x/2}<%
\dfrac{\Gamma \left( x+1\right) }{\sqrt{2\pi x}\left( x/e\right) ^{x}}%
<\left( x\sinh \dfrac{1}{x}+\dfrac{1}{810x^{6}}\right) ^{x/2}\bigskip \\ 
<\left( x\sinh \dfrac{1}{x}\right) ^{\frac{x}{2}\left( 1+\frac{1}{135x^{4}}%
\right) }<\left( x\sinh \dfrac{1}{x}\right) ^{x/2}\left( 1+\dfrac{1}{%
1620x^{5}}\right) \bigskip \\ 
<\left( x\sinh \dfrac{1}{x}\right) ^{x/2}\exp \left( \dfrac{1}{1620x^{5}}%
\right) <\sqrt{2\pi x}\left( \dfrac{x}{e}\right) ^{x}\left( x\sinh \left( 
\dfrac{1}{x}+\dfrac{1}{810x^{7}}\right) \right) ^{x/2}%
\end{array}
\label{MI}
\end{equation}%
with the best constant%
\begin{equation*}
\beta _{0}=\frac{e}{\sqrt{2\pi \sinh 1+\pi /405}}\approx 0.999\,81.
\end{equation*}
\end{theorem}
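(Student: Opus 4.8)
The plan is to establish (i) by differential analysis of $f_1$ and then read off the first two links of the chain (\ref{MI}) from it, the remaining links being inequalities among elementary functions. Throughout set $g(x)=x\sinh(1/x)+1/(810x^6)$, so that $f_1(x)=\ln\Gamma(x+1)-\ln W_1(x)$ with $W_1$ as in (\ref{W1}) and $e^{f_1(x)}=\Gamma(x+1)/W_1(x)$.

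First I would compute
$$f_1'(x)=\psi(x+1)-\ln x-\frac{1}{2x}-\frac12\ln g(x)-\frac{x}{2}\,\frac{g'(x)}{g(x)},$$
$$f_1''(x)=\psi'(x+1)-\frac1x+\frac{1}{2x^2}-\frac{g'(x)}{g(x)}-\frac{x}{2}\,\frac{g''(x)}{g(x)}+\frac{x}{2}\Bigl(\frac{g'(x)}{g(x)}\Bigr)^{2}.$$
The strategy is to prove $f_1''<0$ on $[1,\infty)$ (concavity) together with $\lim_{x\to\infty}f_1'(x)=0$. Once these hold, $f_1'$ is strictly decreasing and tends to $0$, hence $f_1'>0$ throughout, so $f_1$ is strictly increasing; this is exactly (i). Both the limit $\lim_{x\to\infty}f_1'(x)=0$ and the limit $\lim_{x\to\infty}f_1(x)=0$ used below are immediate from the asymptotic relation $\ln\Gamma(x+1)-\ln W_1(x)\sim-\tfrac{163}{340200}x^{-7}$ recorded just before the theorem (differentiated termwise for $f_1'$), which itself rests on Theorem \ref{MT-W0*} and Lemma \ref{L-lnsh/t-ae}.

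The analytic heart --- and the step I expect to be the main obstacle --- is the sign of $f_1''$. Here I would substitute a sharp two-sided estimate for $\psi'(x+1)=\sum_{k\ge1}(x+k)^{-2}$ and expand $\ln(x\sinh(1/x))=\sum_{n\ge1}\frac{2^{2n}B_{2n}}{2n(2n)!}x^{-2n}$ from Lemma \ref{L-lnsh/t-ae}, so that $f_1''$ becomes a combination of explicit elementary functions. Because $W_1$ approximates $\Gamma(x+1)$ to order $x^{-7}$, all the leading terms in this combination cancel and the sign is governed by the first surviving coefficient; the difficulty is controlling the remainder uniformly on $[1,\infty)$. I anticipate splitting the interval into a bounded part $[1,x_0]$, where $f_1''<0$ is verified by explicit numerical bounding of $\psi'$ and of the $g$-quotients, and a tail $[x_0,\infty)$, where the asymptotic expansion together with an effective error term (in the spirit of the remainder estimate of Theorem \ref{MT-W0-Rn} and the two-sided inequalities (\ref{g><}), (\ref{lnsht/t-s<>})) fixes the sign.

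For (ii), the two leftmost inequalities follow at once from (i): since $f_1$ is strictly increasing with $f_1(x)\to0$, we have $\beta_0=e^{f_1(1)}\le e^{f_1(x)}<1$ for $x\ge1$, and because $e^{f_1(x)}=\Gamma(x+1)/\bigl(\sqrt{2\pi x}(x/e)^x g(x)^{x/2}\bigr)$, multiplying through by $g(x)^{x/2}$ yields $\beta_0\,g(x)^{x/2}\le\Gamma(x+1)/(\sqrt{2\pi x}(x/e)^x)<g(x)^{x/2}$; here $\beta_0=e^{f_1(1)}=e/\sqrt{2\pi\sinh1+\pi/405}$ is computed from $\Gamma(2)=1$, and optimality is the attainment of equality at $x=1$. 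The remaining links no longer involve $\Gamma$. Writing $u=x\sinh(1/x)>1$ and $s=\ln u=\sum_{n\ge1}\frac{2^{2n}B_{2n}}{2n(2n)!}x^{-2n}$, I would divide each by $(x\sinh(1/x))^{x/2}$, take logarithms, and reduce to elementary inequalities in $u$ and $s$: the step $(x\sinh(1/x))^{x/2}(1+\tfrac{1}{1620x^5})<(x\sinh(1/x))^{x/2}\exp(\tfrac{1}{1620x^5})$ is simply $1+v<e^v$ with $v=1/(1620x^5)$; the step $g(x)^{x/2}<(x\sinh(1/x))^{(x/2)(1+1/(135x^4))}$ and the step to $W_{c1}$ follow from $\ln(1+v)<v$ together with $\ln u\ge1-1/u$ and the expansion $u=1+\tfrac{1}{6x^2}+\tfrac{1}{120x^4}+\cdots$; and the final comparison with $W_{l1}$ from (\ref{Wl1}) follows by expanding $\ln\bigl(x\sinh(1/x+1/(810x^7))\bigr)$ via Lemma \ref{L-lnsh/t-ae} and comparing first non-cancelling coefficients, exactly as in the limit computations preceding the theorem.
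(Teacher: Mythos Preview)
Your outline for (i) matches the paper's: compute $f_1''$, prove $f_1''<0$ on $[1,\infty)$, and combine with $f_1'(x)\to0$ to get $f_1'>0$. You even make explicit the passage from concavity to monotonicity, which the paper leaves implicit. Two remarks, however.

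\textbf{On the core step $f_1''<0$.} Your plan of ``numerical bounding on $[1,x_0]$ plus asymptotics on $[x_0,\infty)$'' is not the route taken in the paper, and as written it is not a proof: you have not specified $x_0$, the precision of the numerical check, or the effective error on the tail. The paper instead proceeds entirely symbolically. It first proves (Lemma~\ref{L-2.1}, by a telescoping argument on $g(x)=\psi'(x+\tfrac12)-R(x)$) the sharp rational upper bound
\[
\psi'(x+\tfrac12)<\frac{1}{x}\,\frac{x^{4}+\tfrac{67}{36}x^{2}+\tfrac{256}{945}}{x^{4}+\tfrac{35}{18}x^{2}+\tfrac{407}{1008}},
\]
substitutes $t=1/x\in(0,1]$, and reduces $f_1''<0$ to an explicit inequality of the form $p_6(t)\sinh^2 t+p_{13}(t)\cosh t-p_{14}(t)\sinh t+p_{20}(t)<0$. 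This is then dispatched by truncating the Taylor series of $\cosh t$ and $\sinh t$ at degree~$8$ and applying a one-sign-change polynomial criterion (Lemma~\ref{L-pz}) to the resulting degree-$22$ polynomial. The point is that the rational bound on $\psi'$ is exactly tuned so that all the delicate cancellations happen algebraically; your generic ``sharp two-sided estimate for $\psi'(x+1)$'' does not tell the reader which estimate is sharp enough to make the reduction work.

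A minor caveat: you justify $\lim_{x\to\infty}f_1'(x)=0$ by ``termwise differentiation'' of the asymptotic relation for $f_1$. Termwise differentiation of an asymptotic expansion is not generally valid; better to argue directly from the known asymptotics $\psi(x+1)=\ln x+\tfrac{1}{2x}+O(x^{-2})$ and the elementary expansions of $g$ and $g'/g$.

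\textbf{On the elementary chain in (ii).} Your treatment of the first two inequalities and of $1+v<e^v$ is fine and matches the paper. But two links are under-argued. You do not clearly address $W_{c1}<W_{01}^{\ast}$, which amounts to $\tfrac{1}{270x^3}\ln\!\bigl(x\sinh\tfrac1x\bigr)<\ln\!\bigl(1+\tfrac{1}{1620x^5}\bigr)$; here the leading $1/(1620x^5)$ terms cancel, so one needs a two-term upper bound on $\ln(x\sinh\tfrac1x)$, and the paper uses the $n=2$ case of (\ref{lnsht/t-s<>}) together with a derivative computation. For $W_{01}<W_{l1}$ your ``compare first non-cancelling coefficients'' is not a proof, since again the leading terms match; the paper instead Taylor-expands $\ln\sinh$ at $t$ to third order with positive remainder, reducing the inequality to positivity of an explicit power series. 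All four comparisons are collected in Lemma~\ref{L-Wk}, each with a short but non-trivial argument; your sketch would need to supply these details.
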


To prove this theorem, we need three lemmas.

\begin{lemma}
\label{L-2.1}The inequalities%
\begin{equation}
\psi ^{\prime }\left( x+\frac{1}{2}\right) <\frac{1}{x}\frac{x^{4}+\frac{67}{%
36}x^{2}+\frac{256}{945}}{x^{4}+\frac{35}{18}x^{2}+\frac{407}{1008}}
\label{dpsi<I}
\end{equation}%
hold for all $x>0$.
\end{lemma}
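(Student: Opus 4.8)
The plan is to strip the transcendental character from the trigamma function and reduce the claim to a single rational inequality, by combining the functional equation $\psi'(y+1)=\psi'(y)-1/y^{2}$ with a telescoping argument. Write $R(x)$ for the rational function on the right-hand side of the asserted inequality, let $Q(x)=x^{4}+\frac{35}{18}x^{2}+\frac{407}{1008}$ denote its (strictly positive) denominator, and set $G(x)=R(x)-\psi'(x+\frac12)$, so that the statement is exactly $G(x)>0$ for all $x>0$. Two observations organise the proof. First, $\lim_{x\to\infty}G(x)=0$, since $R(x)\sim 1/x$ and $\psi'(x+\frac12)\to 0$. Second, the increment $D(x):=G(x)-G(x+1)$ is free of $\psi'$: the half-integer recurrence gives $\psi'(x+\frac12)-\psi'(x+\frac32)=1/(x+\frac12)^{2}$, whence
\[
D(x)=R(x)-R(x+1)-\frac{1}{(x+\tfrac12)^{2}},
\]
a rational function of $x$.

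First I would prove that $D(x)>0$ for every $x>0$. Granting this, the telescoping identity $\sum_{k=0}^{N-1}D(x+k)=G(x)-G(x+N)$ together with $G(x+N)\to 0$ yields $G(x)=\sum_{k=0}^{\infty}D(x+k)$, so positivity of every summand forces $G(x)>0$, which is the desired bound. In this way the whole lemma collapses to the rational inequality $R(x)-R(x+1)>1/(x+\frac12)^{2}$ on $(0,\infty)$.

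To establish this I would clear denominators over the common, and for $x>0$ strictly positive, denominator $x\,Q(x)\,(x+1)\,Q(x+1)\,(x+\frac12)^{2}$, reducing the inequality to the positivity of the resulting numerator $p(x)$, a polynomial of modest degree with rational coefficients. The coefficients of $R$ are chosen precisely so that $R$ is the Pad\'e-type approximant matching the asymptotic expansion $\psi'(x+\frac12)\sim 1/x-1/(12x^{3})+\cdots$; consequently the top-order contributions to $R(x)-R(x+1)$ and to $1/(x+\frac12)^{2}$ cancel, which lowers the degree of $p$, and the surviving leading coefficient is positive, consistent with $D(x)>0$ for large $x$. Near the origin the simple pole $R(x)\sim c/x$ (with $c=\frac{256/945}{407/1008}>0$) dominates while $R(x+1)$ and $1/(x+\frac12)^{2}$ stay bounded, so $D(x)\to+\infty$ there as well; thus positivity is plausible at both ends of $(0,\infty)$.

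The main obstacle is the final verification that $p(x)>0$ throughout $(0,\infty)$. Once the denominators are cleared, $p$ is an explicit polynomial, and the cleanest outcome would be that all of its coefficients turn out to be nonnegative, in which case positivity for $x>0$ is immediate; should a few coefficients be negative, I would instead confirm positivity by a Sturm-sequence count or by writing $p$ as a manifestly positive combination of powers of $x$. This step is a finite, mechanical, if somewhat lengthy, computation, and it carries essentially all the remaining difficulty, since the telescoping reduction has already eliminated the trigamma function itself.
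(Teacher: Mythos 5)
Your proposal is correct and is essentially the paper's own proof: the paper sets $g(x)=\psi'\left(x+\frac{1}{2}\right)-R(x)$ (your $-G$), shows $g(x+1)-g(x)>0$ by exactly your reduction---using $\psi'\left(x+\frac{1}{2}\right)-\psi'\left(x+\frac{3}{2}\right)=1/\left(x+\frac{1}{2}\right)^{2}$ to turn the increment into a rational function---and then telescopes using $\lim_{n\to\infty}g(x+n)=0$. The one step you left as a mechanical verification turns out to be trivial: after clearing denominators the increment equals $921600$ divided by the manifestly positive product $x(x+1)(2x+1)^{2}\left(1008x^{4}+1960x^{2}+407\right)\left(1008x^{4}+4032x^{3}+8008x^{2}+7952x+3375\right)$, i.e.\ the numerator $p$ is a positive constant, so no Sturm-sequence or coefficient-sign analysis is needed.
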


\begin{proof}
Let%
\begin{equation*}
g\left( x\right) =\psi\left( x+\frac{1}{2},1\right) -\frac{1}{x}\frac{x^{4}+%
\frac{67}{36}x^{2}+\frac{256}{945}}{x^{4}+\frac{35}{18}x^{2}+\frac{407}{1008}%
}.
\end{equation*}%
Then we have 
\begin{eqnarray*}
g\left( x+1\right) -g\left( x\right) &=&\psi\left( x+\frac{3}{2},1\right) -%
\frac{1}{x+1}\frac{\left( x+1\right) ^{4}+\frac{67}{36}\left( x+1\right)
^{2}+\frac{256}{945}}{\left( x+1\right) ^{4}+\frac{35}{18}\left( x+1\right)
^{2}+\frac{407}{1008}} \\
&&-\psi\left( x+\frac{1}{2},1\right) +\frac{1}{x}\frac{x^{4}+\frac{67}{36}%
x^{2}+\frac{256}{945}}{x^{4}+\frac{35}{18}x^{2}+\frac{407}{1008}}
\end{eqnarray*}%
\begin{equation*}
=\dfrac{921\,600}{x\left( x+1\right) \left( 2x+1\right) ^{2}\left(
1008x^{4}+1960x^{2}+407\right) \left(
1008x^{4}+4032x^{3}+8008x^{2}+7952x+3375\right) }>0.
\end{equation*}%
Hence, we conclude that%
\begin{equation*}
g\left( x\right) <g\left( x+1\right) <\cdot \cdot \cdot <\lim_{n\rightarrow
\infty }g\left( x+n\right) =0,
\end{equation*}%
which proves (\ref{dpsi<I}), and the proof is done.
\end{proof}

The second lemma offers a simple criterion to determine the sign of a\ class
of special polynomial on given interval contained in $\left( 0,\infty
\right) $ without using Descartes' Rule of Signs, which play an important
role in studying for certain special functions, see for example \cite%
{Yang-AAA-2014-702718}, \cite{Yang-JMI-12-2018}. A series version can be
found in \cite{Yang-arxiv-1705-05704}.

\begin{lemma}[{\protect\cite[Lemma 7]{Yang-AAA-2014-702718}}]
\label{L-pz}Let $n\in \mathbb{N}$ and $m\in \mathbb{N}\cup \{0\}$ with $n>m$
and let $P_{n}\left( t\right) $ be an $n$ degrees polynomial defined by 
\begin{equation}
P_{n}\left( t\right) =\sum_{i=m+1}^{n}a_{i}t^{i}-\sum_{i=0}^{m}a_{i}t^{i},
\label{2.4}
\end{equation}%
where $a_{n},a_{m}>0$, $a_{i}\geq 0$ for $0\leq i\leq n-1$ with $i\neq m$.
Then there is a unique number $t_{m+1}\in \left( 0,\infty \right) $ to
satisfy $P_{n}\left( t\right) =0$ such that $P_{n}\left( t\right) <0$ for $%
t\in \left( 0,t_{m+1}\right) $ and $P_{n}\left( t\right) >0$ for $t\in
\left( t_{m+1},\infty \right) $.

Consequently, for given $t_{0}>0$, if $P_{n}\left( t_{0}\right) >0$ then $%
P_{n}\left( t\right) >0$ for $t\in \left( t_{0},\infty \right) $ and if $%
P_{n}\left( t_{0}\right) <0$ then $P_{n}\left( t\right) <0$ for $t\in \left(
0,t_{0}\right) $.
\end{lemma}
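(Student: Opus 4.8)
The plan is to reduce this polynomial sign problem to a monotonicity statement for a single Laurent polynomial obtained by dividing out $t^{m}$. First I would write $P_{n}(t)=t^{m}g(t)$ for $t>0$, where
\[
g(t)=\frac{P_{n}(t)}{t^{m}}=\sum_{i=m+1}^{n}a_{i}t^{i-m}-\sum_{i=0}^{m}a_{i}t^{i-m}.
\]
Since $t^{m}>0$ on $(0,\infty)$, the sign of $P_{n}(t)$ coincides with that of $g(t)$, so it suffices to analyze $g$.

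The key step is to show that $g$ is strictly increasing on $(0,\infty)$, and I would argue term by term rather than through a derivative computation. Each term with $i>m$ has the form $a_{i}t^{i-m}$ with $a_{i}\geq 0$ and positive exponent $i-m\geq 1$, hence is nondecreasing; because $a_{n}>0$ and $n-m\geq 1$, the top term $a_{n}t^{n-m}$ is strictly increasing. The term indexed by $i=m$ is the constant $-a_{m}$. Each term with $i<m$ equals $-a_{i}t^{i-m}$ with $a_{i}\geq 0$ and negative exponent $i-m<0$, so $t^{i-m}$ is decreasing and $-a_{i}t^{i-m}$ is nondecreasing. Summing these monotone pieces, $g$ is strictly increasing.

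Next I would record the boundary behaviour. As $t\to\infty$ the leading term $a_{n}t^{n-m}\to+\infty$ dominates while the subtracted part tends to the finite value $a_{m}$, so $g(t)\to+\infty$. As $t\to 0^{+}$ the added part tends to $0$, and the subtracted part is at least $a_{m}>0$ (the terms $a_{i}t^{i-m}$ with $i<m$ either vanish or blow up to $+\infty$); in every case $\lim_{t\to 0^{+}}g(t)\leq -a_{m}<0$. By continuity, strict monotonicity, and the intermediate value theorem, $g$ has a unique zero $t_{m+1}\in(0,\infty)$ with $g<0$ on $(0,t_{m+1})$ and $g>0$ on $(t_{m+1},\infty)$; transferring through $P_{n}=t^{m}g$ yields exactly the claimed sign pattern for $P_{n}$. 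The final ``consequently'' statement is then immediate: $P_{n}(t_{0})>0$ forces $t_{0}>t_{m+1}$, whence $P_{n}>0$ on $(t_{0},\infty)$, while $P_{n}(t_{0})<0$ forces $t_{0}<t_{m+1}$, whence $P_{n}<0$ on $(0,t_{0})$.

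I do not anticipate a serious obstacle; the argument is elementary. The only points requiring care are verifying that the monotonicity is \emph{strict} (which is guaranteed solely by the single hypothesis $a_{n}>0$, regardless of how many of the remaining coefficients vanish) and handling the limit at $0^{+}$ uniformly across the cases where some lower coefficients are zero, so that $g(0^{+})$ is negative in every case.
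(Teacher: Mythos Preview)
Your proposal is correct. The key observation that $g(t)=P_{n}(t)/t^{m}$ is strictly increasing on $(0,\infty)$ (since every monomial $a_i t^{i-m}$ with $i>m$ is nondecreasing and the leading one is strictly increasing, while every $-a_i t^{i-m}$ with $i\leq m$ is nondecreasing) together with the endpoint behaviour $g(0^{+})\leq -a_m<0$ and $g(\infty)=+\infty$ yields the unique sign change cleanly.

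As for comparison with the paper: the paper does not supply its own proof of this lemma. It is quoted verbatim from \cite[Lemma~7]{Yang-AAA-2014-702718} and used as a black box in the proof of Theorem~\ref{T-f1}. Your argument therefore fills in what the paper delegates to an external reference, and it does so by a self-contained and elementary route (termwise monotonicity of the quotient $P_n(t)/t^{m}$ rather than, say, Descartes' rule of signs applied to $P_n$ directly). The one cosmetic point: when you write $\lim_{t\to 0^{+}}g(t)\leq -a_m$, it would read more naturally to distinguish the two cases---if some $a_i>0$ with $i<m$ the limit is $-\infty$, otherwise it equals $-a_m$---but the inequality as stated is of course correct and suffices.
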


\begin{lemma}
\label{L-Wk}Let $W_{01}\left( x\right) $, $W_{01}^{\ast }\left( x\right) $, $%
W_{1}\left( x\right) $, $W_{c1}\left( x\right) $ and $W_{l1}\left( x\right) $
be defined by (\ref{W01}), (\ref{W01*}), (\ref{W1}), (\ref{Wc1}) and (\ref%
{Wl1}), respectively. Then we have%
\begin{equation*}
W_{1}\left( x\right) <W_{c1}\left( x\right) <W_{01}^{\ast }\left( x\right)
<W_{01}\left( x\right) <W_{l1}\left( x\right)
\end{equation*}%
for all $x\geq 1$.
\end{lemma}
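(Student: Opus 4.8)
The plan is to pass to logarithms. All five functions $W_{1},W_{c1},W_{01}^{\ast },W_{01},W_{l1}$ carry the common positive factor $\sqrt{2\pi x}\left( x/e\right) ^{x}$, so after dividing it out and taking logarithms the asserted chain becomes four comparisons among the remaining ``bodies''. Setting $t=1/x\in (0,1]$ and using $x\sinh (1/x)=(\sinh t)/t$, each body turns into an expression built from $\ln ((\sinh t)/t)$ and elementary functions of $t$; this is exactly the situation governed by Lemma \ref{L-lnsh/t-ae}, which supplies both the series (\ref{lnsht/t-s}) and the two-sided truncation bounds (\ref{lnsht/t-s<>}). The innermost comparison $W_{01}^{\ast }(x)<W_{01}(x)$ is then immediate, as it reduces to $\ln (1+1/(1620x^{5}))<1/(1620x^{5})$, i.e. to $\ln (1+u)<u$. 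After the substitution $t=1/x$ and multiplication by the positive factor $x/2$ (equivalently $2t$) where needed, the other three comparisons become
\begin{gather*}
\frac{t^{3}}{270}\ln \frac{\sinh t}{t}<\ln \left( 1+\frac{t^{5}}{1620}\right) ,\\
\ln \left( \frac{\sinh t}{t}+\frac{t^{6}}{810}\right) <\left( 1+\frac{t^{4}}{135}\right) \ln \frac{\sinh t}{t},\\
\frac{t^{6}}{810}<\ln \sinh \left( t+\frac{t^{7}}{810}\right) -\ln \sinh t,
\end{gather*}
corresponding to $W_{c1}<W_{01}^{\ast }$, $W_{1}<W_{c1}$, and $W_{01}<W_{l1}$, respectively.

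The remaining work is to eliminate the transcendental ingredients in each line and collapse it to a polynomial (or rational) inequality on $(0,1]$. For the first line I would bound the left side from above by the odd (upper) truncation of (\ref{lnsht/t-s<>}) and the right side from below via $\ln (1+u)>u-u^{2}/2$. For the second line I would rewrite its left side as $\ln ((\sinh t)/t)+\ln (1+t^{7}/(810\sinh t))$, estimate the new logarithm by $\ln (1+u)<u$ together with $(\sinh t)/t>1+t^{2}/6$, and bound the $\ln ((\sinh t)/t)$ on the right from below by its even (lower) truncation. For the third line I would use that $t\mapsto \coth t$ is decreasing, so that with $\varepsilon =t^{7}/810$ one has $\ln \sinh (t+\varepsilon )-\ln \sinh t=\int_{t}^{t+\varepsilon }\coth u\,du>\varepsilon \coth (t+\varepsilon )$, reducing the claim to $\coth (t+\varepsilon )>1/t$, which the series for $\coth$ turns into a polynomial statement. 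In every case the resulting polynomial inequality on $(0,1]$ I would settle by the monotone sign criterion of Lemma \ref{L-pz}, or equivalently by exhibiting an increasing auxiliary polynomial that stays negative up to $t=1$.

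The main obstacle is the delicacy produced by leading-order cancellation. The limits computed immediately before the lemma show that all five approximations agree with $\Gamma (x+1)$ to order $x^{-7}$ in the logarithm, so in each of the three nontrivial comparisons the dominant terms cancel exactly and the correct sign is decided only at the next order, a factor $t^{7}$ or $t^{8}$. Consequently the truncations from (\ref{lnsht/t-s<>}) must be carried far enough, and the estimates for $\ln (1+u)$ and for $\coth$ must be two-sided and matched to precisely that order; the bulk of the effort is the careful bookkeeping guaranteeing that, after clearing denominators, the reduced polynomial keeps the sign predicted by the asymptotics uniformly on $(0,1]$ and not merely for small $t$.
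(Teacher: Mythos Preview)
Your plan is correct and follows essentially the same route as the paper: pass to logarithms, substitute $t=1/x\in(0,1]$, and reduce each nontrivial comparison to a one-variable inequality built from $\ln((\sinh t)/t)$ and elementary pieces, then clear the transcendentals via the truncation bounds of Lemma~\ref{L-lnsh/t-ae} (or $\ln(1+u)$ estimates) and finish with a polynomial check on $(0,1]$ via Lemma~\ref{L-pz}.

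Two small tactical differences are worth noting. For $W_{1}<W_{c1}$, the paper does not split off $\ln(1+t^{7}/(810\sinh t))$; instead it observes that the auxiliary expression is monotone in the variable $y=(\sinh t)/t$ and replaces $y$ by the lower bound $1+t^{2}/6$, then differentiates the resulting function of $t$. Your direct ``upper-bound the left, lower-bound the right'' also works once you carry the truncation to the order where the $t^{6}/810$ leading terms cancel. For $W_{01}<W_{l1}$, the paper uses a second-order Taylor expansion of $\ln\sinh$ with positive third-derivative remainder, arriving at $h_{31}(t)=810t\sinh 2t-810\cosh 2t+810-t^{8}>0$; your integral bound $\int_{t}^{t+\varepsilon}\coth>\varepsilon\coth(t+\varepsilon)$ reduces instead to $\coth(t+\varepsilon)>1/t$, which is equivalent to $t\cosh(t+\varepsilon)>\sinh(t+\varepsilon)$ and, after expanding $\cosh$ and $\sinh$, becomes the positive series $-\varepsilon+\sum_{n\ge 1}\frac{2nt-\varepsilon}{(2n+1)!}(t+\varepsilon)^{2n}$ with all $n\ge 1$ terms positive on $(0,1]$. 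Both variants land on elementary polynomial checks of the same flavor; the paper's choices just keep the algebra a bit shorter.
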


\begin{proof}
(i) The first inequality $W_{1}\left( x\right) <W_{c1}\left( x\right) $ is
equivalent to%
\begin{equation*}
h_{1}\left( t\right) =\ln \left( \frac{\sinh t}{t}+\frac{1}{810}t^{6}\right)
-\left( 1+\frac{1}{135}t^{4}\right) \ln \left( \frac{\sinh t}{t}\right) <0
\end{equation*}%
for $t=1/x\in (0,1]$. We have%
\begin{equation*}
\frac{d}{dy}\left( \ln \left( y+\frac{1}{810}t^{6}\right) -\left( 1+\frac{1}{%
135}t^{4}\right) \ln y\right) =-\frac{1}{135}t^{4}\frac{810y+135t^{2}+t^{6}}{%
y\left( 810y+t^{6}\right) }<0
\end{equation*}%
for $y>1$, which together with the inequality%
\begin{equation*}
y=\frac{\sinh t}{t}>1+\frac{1}{6}t^{2}
\end{equation*}%
for $t>0$ yields%
\begin{equation*}
h_{1}\left( t\right) <\ln \left( 1+\frac{1}{6}t^{2}+\frac{1}{810}%
t^{6}\right) -\left( 1+\frac{1}{135}t^{4}\right) \ln \left( 1+\frac{1}{6}%
t^{2}\right) :=h_{11}\left( t\right) .
\end{equation*}%
Differentiation leads us to%
\begin{eqnarray*}
\frac{135}{2t^{3}}h_{11}^{\prime }\left( t\right)  &=&-2\ln \left( \frac{1}{6%
}t^{2}+1\right) -t^{2}\frac{t^{6}-135t^{2}-1620}{\left( t^{2}+6\right)
\left( t^{6}+135t^{2}+810\right) }:=h_{12}\left( t\right) , \\
h_{12}^{\prime }\left( t\right)  &=&-4\frac{t^{3}\left(
t^{12}+9t^{10}+540t^{8}+8505t^{6}+47\,385t^{4}+328\,050t^{2}+1312\,200%
\right) }{\left( t^{2}+6\right) ^{2}\left( t^{6}+135t^{2}+810\right) ^{2}}<0
\end{eqnarray*}%
for $t>0$. Therefore, we obtain $h_{12}\left( t\right) <h_{12}\left(
0\right) =0$, and so $h_{11}\left( t\right) <h_{11}\left( 0\right) =0$,
which implies $h_{1}\left( t\right) <0$ for $t>0$.

(ii) The second one $W_{c1}\left( x\right) <W_{01}^{\ast }\left( x\right) $
is equivalent to%
\begin{equation*}
\frac{x}{2}\left( 1+\frac{1}{135x^{4}}\right) \ln \left( x\sinh \dfrac{1}{x}%
\right) <\frac{x}{2}\ln \left( x\sinh \dfrac{1}{x}\right) +\ln \left( 1+%
\dfrac{1}{1620x^{5}}\right) ,
\end{equation*}%
or equivalently, 
\begin{equation*}
h_{2}\left( t\right) =\frac{1}{270}t^{3}\ln \left( \frac{\sinh t}{t}\right)
-\ln \left( 1+\dfrac{1}{1620}t^{5}\right) <0
\end{equation*}%
for $t=1/x\in (0,1]$. Taking $n=2$ in the inequalities (\ref{lnsht/t-s<>})
gives%
\begin{equation*}
\ln \left( \frac{\sinh t}{t}\right) <\frac{1}{6}t^{2}-\frac{1}{180}t^{4}+%
\frac{1}{2835}t^{6},
\end{equation*}%
which is applied to the expression of $h_{2}\left( t\right) $:%
\begin{equation*}
h_{2}\left( t\right) <\frac{1}{270}t^{3}\left( \frac{1}{6}t^{2}-\frac{1}{180}%
t^{4}+\frac{1}{2835}t^{6}\right) -\ln \left( 1+\dfrac{1}{1620}t^{5}\right)
:=h_{21}\left( t\right) .
\end{equation*}%
Differentiation yields%
\begin{equation*}
h_{21}^{\prime }\left( t\right) =\frac{t^{6}}{340\,200}\frac{%
4t^{7}-49t^{5}+1050t^{3}+6480t^{2}-79\,380}{t^{5}+1620}<0
\end{equation*}%
for $t\in (0,1]$, which proves $h_{2}\left( t\right) <0$ for $t\in (0,1]$.

(iii) The third one $W_{01}^{\ast }\left( x\right) <W_{01}\left( x\right) $
is equivalent to%
\begin{equation*}
1+\frac{1}{1620x^{5}}<\exp \left( \frac{1}{1620x^{5}}\right) ,
\end{equation*}%
which follows by a simple inequality $1+y<e^{y}$ for $y\in \mathbb{R}$.

(iv) The fourth one $W_{01}\left( x\right) <W_{l1}\left( x\right) $ is
equivalent to%
\begin{equation*}
\frac{x}{2}\ln \left( x\sinh \left( \frac{1}{x}+\frac{1}{810x^{7}}\right)
\right) >\frac{x}{2}\ln \left( x\sinh \frac{1}{x}\right) +\frac{1}{1620x^{5}}%
,
\end{equation*}%
or equivalently,%
\begin{equation*}
h_{3}\left( t\right) =\ln \sinh \left( t+\frac{1}{810}t^{7}\right) -\ln
\sinh t-\frac{1}{810}t^{6}>0
\end{equation*}%
for $t=1/x>0$. Denote by $h_{30}\left( t\right) =\ln \sinh t$. Then by
Taylor formula we have%
\begin{eqnarray*}
h_{3}\left( t\right) &=&h_{30}\left( t+\frac{1}{810}t^{7}\right)
-h_{30}\left( t\right) -\frac{1}{810}t^{6} \\
&=&\frac{t^{7}}{810}h_{30}^{\prime }\left( t\right) +\frac{1}{2!}\frac{t^{14}%
}{810^{2}}h_{30}^{\prime \prime }\left( t\right) +\frac{1}{3!}\frac{t^{21}}{%
810^{3}}h_{30}^{\prime \prime \prime }\left( \xi \right) -\frac{1}{810}t^{6},
\end{eqnarray*}%
where $t<\xi <t+t^{7}/810$. Since $h_{30}^{\prime \prime \prime }\left(
t\right) =2\left( \cosh t\right) /\sinh ^{3}t>0$, we get 
\begin{equation*}
h_{3}\left( t\right) >\frac{1}{810}t^{7}\frac{\cosh t}{\sinh t}-\frac{t^{14}%
}{2\times 810^{2}}\frac{1}{\sinh ^{2}t}-\frac{1}{810}t^{6}:=\frac{%
t^{6}\times h_{31}\left( t\right) }{2\times 810^{2}\sinh ^{2}t},
\end{equation*}%
where%
\begin{equation*}
h_{31}\left( t\right) =810t\sinh 2t-810\cosh 2t+810-t^{8}.
\end{equation*}%
Due to%
\begin{equation*}
h_{31}\left( t\right) =540t^{4}+144t^{6}+\frac{101}{7}t^{8}+810\sum_{n=5}^{%
\infty }\frac{\left( n-1\right) 2^{2n}}{\left( 2n\right) !}t^{2n}>0,
\end{equation*}%
we conclude that $h_{3}\left( t\right) >0$ for $t>0$, which completes the
proof.
\end{proof}

We are now in a position to prove Theorem \ref{T-f1}.

\begin{proof}[Proof of Theorem \protect\ref{T-f1}]
(i) Differentiation yields%
\begin{eqnarray*}
f_{1}^{\prime }\left( x\right) &=&\psi \left( x+1\right) -\ln x-\frac{1}{2x}-%
\frac{1}{2}\ln \left( x\sinh \frac{1}{x}+\frac{1}{810x^{6}}\right) \\
&&+3\frac{135x^{6}\cosh \frac{1}{x}-135x^{7}\sinh \frac{1}{x}+1}{%
810x^{7}\sinh \frac{1}{x}+1},
\end{eqnarray*}%
\begin{equation*}
\begin{array}{l}
f_{1}^{\prime \prime }\left( x\right) =\psi ^{\prime }\left( x+1\right) -%
\dfrac{1}{x}+\dfrac{1}{2x^{2}}\bigskip \\ 
-\dfrac{3\left( 109\,350x^{14}\sinh ^{2}\frac{1}{x}+5940x^{7}\sinh \frac{1}{x%
}+135x^{5}\sinh \frac{1}{x}-1890x^{6}\cosh \frac{1}{x}-109\,350x^{12}-1%
\right) }{x\left( 810x^{7}\sinh \frac{1}{x}+1\right) ^{2}}.%
\end{array}%
\end{equation*}

Replacing $x$ by $x+1/2$ in inequality (\ref{dpsi<I}) yields%
\begin{equation*}
\psi ^{\prime }\left( x+1\right) <\frac{1}{30}\frac{3780x^{4}+7560x^{3}+12%
\,705x^{2}+8925x+3019}{\left( 2x+1\right) \left(
63x^{4}+126x^{3}+217x^{2}+154x+60\right) },
\end{equation*}%
for $x>-1/2$, and applying which then making a change of variable $x=1/t\in
(0,1]$ yield%
\begin{equation*}
f_{1}^{\prime \prime }\left( x\right) <\frac{1}{30}\frac{t\left(
3019t^{4}+8925t^{3}+12\,705t^{2}+7560t+3780\right) }{\left( t+2\right)
\left( 60t^{4}+154t^{3}+217t^{2}+126t+63\right) }-t+\frac{1}{2}t^{2}
\end{equation*}%
\begin{equation*}
-3t\frac{109\,350\sinh ^{2}t-1890t^{8}\cosh t+5940t^{7}\sinh t+135t^{9}\sinh
t-109\,350t^{2}-t^{14}}{\left( 810\sinh t+t^{7}\right) ^{2}}
\end{equation*}%
\begin{equation*}
:=\frac{810^{2}t\times f_{11}\left( t\right) }{\left( t+2\right) \left(
126t+217t^{2}+154t^{3}+60t^{4}+63\right) \left( 810\sinh t+t^{7}\right) ^{2}}%
,
\end{equation*}%
where%
\begin{equation*}
f_{11}\left( t\right) =p_{6}\left( t\right) \sinh ^{2}t+p_{13}\left(
t\right) \cosh t-p_{14}\left( t\right) \sinh t+p_{20}\left( t\right) ,
\end{equation*}%
\begin{eqnarray*}
p_{6}\left( t\right) &=&30t^{6}+47t^{5}-\frac{718}{15}%
t^{4}-210t^{3}-259t^{2}-\frac{315}{2}t-63, \\
p_{13}\left( t\right) &=&\frac{7}{810}t^{8}\left( t+2\right) \left(
60t^{4}+154t^{3}+217t^{2}+126t+63\right) ,
\end{eqnarray*}%
\begin{equation*}
p_{14}\left( t\right) =t^{7}\left( \frac{1}{27}t^{7}+\frac{77}{810}t^{6}+%
\frac{2857}{1620}t^{5}+\frac{45\,973}{6075}t^{4}+\frac{1547}{108}t^{3}+\frac{%
12\,341}{810}t^{2}+\frac{77}{9}t+\frac{154}{45}\right) ,
\end{equation*}%
\begin{eqnarray*}
p_{20}\left( t\right) &=&\frac{1}{21\,870}t^{20}+\frac{257}{656\,100}t^{19}+%
\frac{13\,667}{9841\,500}t^{18}+\frac{217}{87\,480}t^{17}+\frac{7}{2700}%
t^{16} \\
&&+\frac{7}{4860}t^{15}+\frac{7}{12\,150}t^{14}+30t^{7}+137t^{6}+\frac{525}{2%
}t^{5}+280t^{4}+\frac{315}{2}t^{3}+63t^{2}.
\end{eqnarray*}

To prove $f_{11}\left( t\right) <0$ for $t\in (0,1]$, we use formula $\sinh
^{2}t=\cosh ^{2}t-1$ to write $f_{11}\left( t\right) $ as%
\begin{equation*}
f_{31}\left( t\right) =\left[ p_{6}\left( t\right) \cosh t+p_{13}\left(
t\right) \right] \cosh t-p_{14}\left( t\right) \sinh t+p_{20}\left( t\right)
-p_{6}\left( t\right) .
\end{equation*}%
Since the coefficients of polynomial $-p_{6}\left( t\right) $ satisfy those
conditions of Lemma \ref{L-pz}, and $-p_{6}\left( 1\right) =19\,811/30>0$,
we see that $-p_{6}\left( t\right) >0$ for $t\in (0,1]$. It then follows
from $\cosh t>1$ that%
\begin{equation*}
p_{6}\left( t\right) \cosh t+p_{13}\left( t\right) <p_{6}\left( t\right)
+p_{13}\left( t\right) 
\end{equation*}%
\begin{eqnarray*}
&=&\frac{14}{27}t^{13}+\frac{959}{405}t^{12}+\frac{245}{54}t^{11}+\frac{392}{%
81}t^{10}+\frac{49}{18}t^{9}+\frac{49}{45}t^{8}+30t^{6} \\
&&+47t^{5}-\frac{718}{15}t^{4}-210t^{3}-259t^{2}-\frac{315}{2}%
t-63:=p_{13}^{\ast }\left( t\right) .
\end{eqnarray*}%
Application of Lemma \ref{L-pz} again with $-p_{13}^{\ast }\left( 1\right)
=173\,959/270>0$ yields $-p_{13}^{\ast }\left( t\right) >0$ for $t\in (0,1]$%
, and so $p_{6}\left( t\right) \cosh t+p_{13}\left( t\right) <0$ for $t\in
(0,1]$. Since $p_{14}\left( t\right) >0$ for $t>0$, using the inequalities%
\begin{eqnarray*}
\cosh t &>&\sum_{n=0}^{4}\frac{t^{2n}}{\left( 2n\right) !}=\frac{1}{40\,320}%
t^{8}+\frac{1}{720}t^{6}+\frac{1}{24}t^{4}+\frac{1}{2}t^{2}+1, \\
\sinh t &>&\sum_{n=1}^{4}\frac{t^{2n-1}}{\left( 2n-1\right) !}=\frac{1}{5040}%
t^{7}+\frac{1}{120}t^{5}+\frac{1}{6}t^{3}+t,
\end{eqnarray*}%
we have%
\begin{eqnarray*}
f_{11}\left( t\right)  &=&\left[ p_{6}\left( t\right) \cosh t+p_{13}\left(
t\right) \right] \cosh t-p_{14}\left( t\right) \sinh t+p_{20}\left( t\right)
-p_{6}\left( t\right)  \\
&<&\left[ p_{6}\left( t\right) \sum_{n=0}^{4}\frac{t^{2n}}{\left( 2n\right) !%
}+p_{13}\left( t\right) \right] \sum_{n=0}^{4}\frac{t^{2n}}{\left( 2n\right)
!}-p_{14}\left( t\right) \sum_{n=1}^{4}\frac{t^{2n-1}}{\left( 2n-1\right) !}%
+p_{20}\left( t\right) -p_{6}\left( t\right) 
\end{eqnarray*}%
\begin{eqnarray*}
&=&\frac{1}{54\,190\,080}t^{22}+\frac{9007}{1625\,702\,400}t^{21}+\frac{%
9615\,889}{109\,734\,912\,000}t^{20}+\frac{5351\,449}{9405\,849\,600}t^{19}
\\
&&+\frac{739\,363\,013}{282\,175\,488\,000}t^{18}+\frac{17\,347\,597}{%
2508\,226\,560}t^{17}+\frac{62\,875\,199}{2090\,188\,800}t^{16}+\frac{25\,247%
}{3483\,648}t^{15} \\
&&-\frac{32\,887}{3732\,480}t^{14}-\frac{232\,765}{193\,536}t^{13}-\frac{%
3620\,941}{870\,912}t^{12}-\frac{292\,093}{34\,560}t^{11}-\frac{292\,093}{%
86\,400}t^{10}:=t^{10}p_{12}\left( t\right) ,
\end{eqnarray*}%
From Lemma \ref{L-pz} and $-p_{12}\left( 1\right)
=67\,766\,507\,802\,179/3950\,456\,832\,000>0$ it follows that $%
-p_{12}\left( t\right) >0$ for $t\in (0,1]$, and so $f_{11}\left( t\right) <0
$ for $t\in (0,1]$, which implies $f_{1}^{\prime \prime }\left( x\right) <0$
for $x\geq 1$.

(ii) Using the increasing property of $f_{1}$ and noting that%
\begin{equation*}
f_{1}\left( 1\right) =\ln \frac{e}{\sqrt{2\pi \sinh 1+\pi /405}}\text{ \ and
\ }\lim_{x\rightarrow \infty }f_{1}\left( x\right) =0\text{,}
\end{equation*}%
we have%
\begin{equation*}
\ln \frac{e}{\sqrt{2\pi \sinh 1+\pi /405}}<\ln \frac{\Gamma \left(
x+1\right) }{\sqrt{2\pi x}\left( \frac{x}{e}\right) ^{x}}-\ln \left( x\sinh 
\frac{1}{x}+\frac{1}{810x^{6}}\right) ^{x/2}<0,
\end{equation*}%
which imply the first and second inequalities of (\ref{MI}).

The other ones of (\ref{MI}) follow from Lemma \ref{L-Wk}, which completes
the proof.
\end{proof}

\section{Conclusions}

In this paper, by a little known power series expansion of $\ln \left(
t^{-1}\sinh t\right) $, that is, (\ref{lnsht/t-s}), we establish an
asymptotic expansion (\ref{g-ae-W0}) for gamma function related to
Windschitl's formula, in which its coefficients have a closed-form
expression (\ref{an}). Moreover, we give an estimate of remainder in
asymptotic expansion (\ref{g-ae-W0}) by means of inequalities (\ref{g><})
and (\ref{lnsht/t-s<>}). And due to (\ref{lnsht/t-s}), we also give another
two asymptotic expansions (\ref{g-ae-W0*}), but its coefficients formula is
of recursive form.

Furthermore, we compare accuracy among five approximation formulas for gamma
function generated by truncating five asymptotic series (\ref{g-ae-W0}), (%
\ref{g-ae-W0*}), (\ref{g-ae-W1}), (\ref{g-ae-C}) and (\ref{g-ae-Lu}) by
numeric computations and some inequalities. These show that the
approximation formula (\ref{W1}) is the best.

\section{Acknowledgements.}

The authors would like to express their sincere thanks to the anonymous
referees for their great efforts to improve this paper.

This work was supported by the Fundamental Research Funds for the Central
Universities (No. 2015ZD29) and the Higher School Science Research Funds of
Hebei Province of China (No. Z2015137).


\begin{thebibliography}{99}
\bibitem{Ramanujan-SB-1988} S. Ramanujan, \emph{The Lost Notebook and Other
Unpublished Papers}, Springer, Berlin, 1988.

\bibitem{Burnside-MM-46-1917} W. Burnside, A rapidly convergent series for $%
\log N!$, \emph{Messenger Math.} \textbf{46} (1917), 157--159.

\bibitem{Gosper-PNAS-75-1978} R. W. Gosper, Decision procedure for
indefinite hypergeometric summation, \emph{Proc. Natl. Acad. Sci.} USA 
\textbf{75} (1978), 40--42.

\bibitem{Alzer-MC-66-1997} H. Alzer, On some inequalities for the gamma and
psi functions, \emph{Math. Comput.} \textbf{66} (1997), no. 217, 373--389.

\bibitem{Alzer-PRSE-139A-2009} H. Alzer, Sharp upper and lower bounds for
the gamma function, \emph{Proc. R. Soc. Edinb. \textbf{139}A} (2009),
709--718.

\bibitem{Smith-2006} W. D. Smith, The gamma function revisited,
http://schule.bayernport.com/gamma/gamma05. pdf, 2006.

\bibitem{http/gamma} http://www.rskey.org/gamma.htm.

\bibitem{Batir-P-27(1)-2008} N. Batir, Sharp inequalities for factorial n, 
\emph{Proyecciones} \textbf{27} (2008), no. 1, 97--102.

\bibitem{Batir-AM-91-2008} N. Batir, Inequalities for the gamma function, 
\emph{Arch. Math.} \textbf{91} (2008), 554--563.

\bibitem{Mortici-AM-93-2009-1} C. Mortici, An ultimate extremely accurate
formula for approximation of the factorial function, \emph{Arch. Math.} 
\textbf{93} (2009), no. 1, 37--45.

\bibitem{Mortici-MMN-11(1)-2010} C. Mortici, New sharp inequalities for
approximating the factorial function and the digamma functions, \emph{%
Miskolc Math. Notes} \textbf{11} (2010), no. 1, 79--86.

\bibitem{Mortici-NA-56-2011} C. Mortici, A new Stirling series as continued
fraction, \emph{Numer. Algor.} \textbf{56} (1) (2011), 17--26.

\bibitem{Mortici-CMA-61-2011} C. Mortici, Improved asymptotic formulas for
the gamma function, \emph{Comput. Math. Appl.} \textbf{61} (2011),
3364--3369.

\bibitem{Mortici-MCM-57-2013} C. Mortici, Further improvements of some
double inequalities for bounding the gamma function, \emph{Math. Comput.
Model.} \textbf{57} (2013), 1360--1363.

\bibitem{Mortici0JMAA-402-2013} C. Mortici, A continued fraction
approximation of the gamma function, \emph{J. Math. Anal. Appl.} \textbf{402}
(2013), 405--410.

\bibitem{Nemes-AM-95-2010} G. Nemes, New asymptotic expansion for the Gamma
function, \emph{Arch. Math. (Basel)} \textbf{95} (2010), 161--169.

\bibitem{Nemes-TJM-9-2011} G. Nemes, More accurate approximations for the
gamma function, \emph{Thai J. Math.} \textbf{9} (2011), 21--28.

\bibitem{Guo-JIPAM-9(1)-2008} B.-N. Guo, Y.-J. Zhang, F. Qi, Refinements and
sharpenings of some double inequalities for bounding the gamma function, 
\emph{J. Inequal. Pure Appl. Math.} \textbf{9} (2008), no. 1, Article 17.

\bibitem{Qi-JCAM-268-2014} F. Qi, Integral representations and complete
monotonicity related to the remainder of Burnside's formula for the gamma
function, \emph{J. Comput. Appl. Math.} \textbf{268} (2014), 155--167.

\bibitem{Chen-JNT-164-2016} Ch.-P. Chen, A more accurate approximation for
the gamma function, \emph{J. Number Theory} \textbf{164} (2016), 417--428.

\bibitem{Yang-AMC-270-2015} Zh.-H. Yang , Y.-M. Chu, Asymptotic formulas for
gamma function with applications, \emph{Appl. Math. Comput.} \textbf{270}
(2015), 665--680.

\bibitem{Yang-tian-jia} Zh.-H. Yang, J.-F. Tian, Monotonicity and
inequalities for the gamma function, \emph{J. Inequal. Appl.}, be submitted.

\bibitem{Yang-JMAA-441-2016} Zh.-H. Yang, Approximations for certain
hyperbolic functions by partial sums of their Taylor series and completely
monotonic functions related to gamma function, \emph{J. Math. Anal. Appl.} 
\textbf{441} (2016), 549--564.

\bibitem{Lu-RJ-35(1)-2014} D. Lu, A new sharp approximation for the Gamma
function related to Burnside's formula, \emph{Ramanujan J.} \textbf{35}
(2014), no. 1, 121--129.

\bibitem{Lu-AMC-253-2015} D. Lu, L. Song, C. Ma, Some new asymptotic
approximations of the gamma function based on Nemes' formula, Ramanujan's
formula and Burnside's formula, \emph{Appl. Math. Comput.} \textbf{253}
(2015), 1--7.

\bibitem{Abramowttz-1972} M. Abramowttz, I. A. Stegun, \emph{Handbook of
Mathematical Functions with Formulas, Graphs and Mathematical Tables},
Dover, New York, 1972.

\bibitem{Koumandos-JMAA-324-2006} S. Koumandos, Remarks on some completely
monotonic functions, \emph{J. Math. Anal. Appl.} \textbf{324} (2006)
1458--1461.

\bibitem{Shi-JCAM-195-2006} X. Shi, F. Liu, M. Hu, A new asymptotic series
for the Gamma function, \emph{J. Comput. Appl. Math.} \textbf{195} (2006),
134--154.

\bibitem{Feng-NA-64-2013} L. Feng, W. Wang, Two families of approximations
for the gamma function, \emph{Numer. Algor.} \textbf{64} (2013), 403--416.

\bibitem{Chen-AML-25-2012} C.-P. Chen, L. Lin, Remarks on asymptotic
expansions for the gamma function, \emph{Appl. Math. Lett.} \textbf{25}
(2012), 2322--2326.

\bibitem{Chen-JCA-2-2013} C.-P. Chen, N. Elezovi\'{c}, L. Vuk\v{s}i\'{c},
Asymptotic formulae associated with the Wallis power function and digamma
function, J. Classical Anal. 2 (2013) 151--166.

\bibitem{Chen-NA-64-2013} C.-P. Chen, Unified treatment of several
asymptotic formulas for the gamma function, \emph{Numer. Algor.} \textbf{64}
(2013), 311--319.

\bibitem{Lu-JNT-136-2014} D. Lu, A generated approximation related to
Burnside's formula, \emph{J. Number Theory} \textbf{136} (2014), 414--422.

\bibitem{Lu-JNT-140-2014} D. Lu, L. Song, C. Ma, A generated approximation
of the gamma function related to Windschitl's formula, \emph{J. Number Theory%
} \textbf{140} (2014), 215--225.

\bibitem{Hirschhorn-RJ-34-2014} M. D. Hirschhorn, M. B. Villarino, A
refinement of Ramanujan's factorial approximation, \emph{Ramanujan J.} 
\textbf{34} (2014), 73--81.

\bibitem{Chen-AMC-245-2014} C.-P. Chen, Asymptotic expansions of the gamma
function related to Windschitl's formula, \emph{Appl. Math. Comput.} \textbf{%
245} (2014), 174--180.

\bibitem{Chen-AMC-261-2015} C.-P. Chen, Inequalities and asymptotic
expansions associated with the Ramanujan and Nemes formulas for the gamma
function, \emph{Appl. Math. Comput.} \textbf{261} (2015), 337--350.

\bibitem{Chen-JNT-149-2015} C.-P. Chen, J.-Y. Liu, Inequalities and
asymptotic expansions for the gamma function, \emph{J. Number Theory} 
\textbf{149} (2015), 313--326.

\bibitem{Chen-AMC-250-2015} C.-P. Chen, R. B. Paris, Inequalities,
asymptotic expansions and completely monotonic functions related to the
gamma function, \emph{Appl. Math. Comput.} \textbf{250} (2015), 514--529.

\bibitem{Lin-JMI-9-2015} L. Lin, C.-P. Chen, Asymptotic formulas for the
gamma function by Gosper, \emph{J. Math. Inequal.} \textbf{9} (2015),
541--551.

\bibitem{Mortici-RJ-38-2015} C. Mortici, A new fast asymptotic series for
the gamma function, \emph{Ramanujan J.} \textbf{38} (2015), 549--559.

\bibitem{Xu-JNT-169-2016} A. Xu, Y. Hu, P. Tang. Asymptotic expansions for
the gamma function, \emph{J. Number Theory} \textbf{169} (2016), 134--143.

\bibitem{Yang-AAA-2014-702718} Zh.-H. Yang, Y.-M. Chu, X.-J, Tao, A double
inequality for the trigamma function and its applications, \emph{Abstr.
Appl. Anal. }\textbf{2014} (2014), Art. ID 702718, 9 pages.

\bibitem{Yang-JMI-12-2018} Zh.-H. Yang, J. Tian, Monotonicity and sharp
inequalities related to gamma function, \emph{J. Math. Inequal.} \textbf{12}
(2018), no. 1, 1--22.

\bibitem{Yang-arxiv-1705-05704} Zh.-H. Yang, J. Tian, Convexity and
monotonicity for the elliptic integrals of the first kind and applications,
arXiv:1705.05703 [math.CA], https://arxiv.org/abs/1705.05703.
\end{thebibliography}
\end{document}